\documentclass[a4paper,12pt]{amsart}

\usepackage{amsthm}
\usepackage[pagebackref]{hyperref} 
\hypersetup{ colorlinks=true, linkcolor=magenta, urlcolor=red, citecolor=blue }

\usepackage[normalem]{ulem} 
\usepackage{graphicx,xcolor}
\usepackage{dsfont}
\usepackage{enumitem}
\usepackage{tikz}
\usepackage{tikz-cd}
\usepackage[utf8]{inputenc}
\usepackage[T1]{fontenc}
\usepackage[all,cmtip]{xy}
\usepackage{todonotes}
\usepackage[titletoc]{appendix}
\usepackage{color}
\usepackage{amsmath,amssymb,amscd,amsfonts}
\usepackage{mathtools}
\usepackage{babel}
\usepackage{amstext}
\usepackage{amsmath}
\usepackage{amsfonts}
\usepackage{latexsym}
\usepackage{ifthen}
\usepackage{yfonts}
\usepackage[all,cmtip]{xy}
\xyoption{all}
\usepackage{enumitem}

\usepackage[pagebackref]{hyperref}
\hypersetup{
  colorlinks=true,
  linkcolor=magenta,
  urlcolor=red,
  citecolor=blue
}

\setlist[enumerate]{label=(\thethm.\arabic*), before={\setcounter{enumi}{\value{equation}}}, after={\setcounter{equation}{\value{enumi}}}}

\newtheorem{thm}{Theorem}[section]
\newtheorem{theo}[thm]{Theorem}
\newtheorem{cor}[thm]{Corollary}
\newtheorem{prop}[thm]{Proposition}
\newtheorem{conj}[thm]{Conjecture}

\newtheorem{lem}[thm]{Lemma}
\theoremstyle{definition}
\newtheorem{defn}[thm]{Definition}

\newtheorem{ques}[thm]{Question}
\theoremstyle{remark}
\newtheorem{rem}[thm]{Remark}
\newtheorem{prob}[thm]{Problem}
\newtheorem{setup}[thm]{Setup}

\newtheorem{claim}[thm]{Claim}

\numberwithin{equation}{section}

\newcommand{\GL}[0]{\operatorname{GL}}

\newcommand{\codim}[0]{\operatorname{codim}}

\newcommand{\rank}[0]{\operatorname{rank}}

\newcommand{\id}{{\rm id}}
\newcommand{\Sym}{{\rm Sym}}

\newcommand{\num}{{\rm{num}}}
\newcommand{\tf}{\rm{tf}}
\newcommand{\orb}{\rm{orb}}

\newcommand{\reg}{{\rm{reg}}}
\newcommand{\sing}{{\rm{sing}}}

\newcommand{\ddbar}{dd^c}

\newcommand{\Ker}[1]{\mathrm{Ker}(#1)}
\newcommand{\Image}[1]{\mathrm{Im}(#1)}

\title[compact K\"ahler varieties with psef tangent sheaf]
{A  note on compact K\"ahler varieties \\ 
with pseudo-effective tangent sheaf}

\author{Shin-ichi MATSUMURA}
\address{Mathematical Institute
$\&$ Division for the Establishment of Frontier Science of Organization for Advanced Studies,
Tohoku University,
6-3, Aramaki Aza-Aoba, Aoba-ku, Sendai 980-8578, Japan.}
\email{{\tt mshinichi-math@tohoku.ac.jp}}
\email{{\tt mshinichi0@gmail.com}}

\author{Guolei ZHONG}
\address{
\textsc{School of Mathematical Sciences, Shanghai Key Laboratory of PMMP}\endgraf 
\textsc{East China Normal University, Shanghai 200241, China}
}
\email{glzhong@math.ecnu.edu.cn}

\thanks{S.\,M.\,was supported by the JST FOREST Program ($\sharp$PMJFR2368).
G.\,Z.\,was supported by the Science and Technology Commission of Shanghai Municipality (No. 22DZ2229014), and the National Natural Science Foundation of China. }

\subjclass[2020]{Primary 32J25; Secondary 14E30, 32Q15.}

\keywords
{Tangent sheaves, 
pseudo-effectivity, 
singular Hermitian metrics,
foliations, 
rationally connected varieties,
MRC fibrations,
numerically flat vector bundles.}

\begin{document}

\begin{abstract}
In this paper, we discuss recent developments and open problems 
concerning the geometry of compact K\"ahler varieties whose tangent sheaves are pseudo-effective in a strong sense.
As our main result, we prove that, after passing to a finite quasi-\'etale cover, any compact K\"ahler variety with quotient singularities and pseudo-effective tangent sheaf admits a flat fibration onto a complex torus, with rationally connected fibers.
We also obtain an analogous structure theorem for klt compact K\"ahler varieties, assuming the existence of minimal models in numerical dimension zero; 
consequently, the result holds unconditionally in the projective setting and for low-dimensional compact K\"ahler varieties.
\end{abstract}

\maketitle

\section{Introduction} 
Positivity properties of tangent sheaves are known to impose strong restrictions on the global geometry of  projective varieties (or more generally, compact K\"ahler varieties). 
At the strongest end, the ampleness of the tangent bundle characterizes projective spaces \cite{SY80, Mor79}. 
Under weaker positivity assumptions on the tangent bundle, the foundational works \cite{HSW81, Mok88, CP91,DPS94} show that if a compact K\"ahler manifold $X$ has nef tangent bundle, or semi-positive holomorphic bisectional curvature, then, after passing to a finite \'etale cover, $X$ admits a locally constant fibration $X \to Y$ onto a complex torus whose fibers are Fano manifolds. 
We refer the reader to \cite{Cao19,CH19,CCM21,MW25} for results on nef anti-canonical divisors, and to  \cite{HW20, Mat22a, Mat26, Yan18} for results on semi-positive holomorphic sectional curvature.

In this paper, we focus on the weaker positivity condition that the tangent
sheaf is \textit{pseudo-effective} in the sense of Definition \ref{def-psef}. 
We shall study the structure of compact K\"ahler varieties with pseudo-effective tangent sheaf,  and discuss several open problems on how this positivity condition affects their global structure. 
For line bundles, the pseudo-effectivity admits an analytic formulation
in terms of singular Hermitian metrics with semipositive curvature current.
For vector bundles (or more generally, for torsion-free sheaves), the analogous
notion is more subtle; we adopt the following definition.

\begin{defn}[{\cite[Definition 2.1]{Mat23}}]\label{def-psef}
Let \(X\) be a normal compact K\"ahler variety.
Fix a K\"ahler form \(\omega_X\) on \(X\), 
that is, a positive $(1,1)$-form with local potential. 
A torsion-free sheaf \(\mathcal{E}\) on \(X\) is \textit{pseudo-effective} if, for every \(m\in\mathbb{Z}_+\), there exists a singular Hermitian metric \(h_m\) on the \(m\)-th symmetric power \(\textup{Sym}^m\mathcal{E}|_{X_0}\) such that \(\sqrt{-1}\Theta_{h_m}\geq-\omega_X\otimes\id_{\textup{Sym}^m\mathcal{E}}\). 
Here \(X_0\coloneqq X_{\textup{reg}}\cap X_{\mathcal{E}}\), where \(X_{\textup{reg}}\) is the non-singular locus of \(X\) and \(X_{\mathcal{E}}\) is the maximal subset where \(\mathcal{E}\) is locally free.
\end{defn}
Note that when \(X\) is normal and \(\mathcal{E}\) is torsion free, the Zariski open subset \(X_0\subseteq X\) above has the complement of codimension \(\geq 2\).

The pseudo-effectivity of the tautological line bundle
$\mathcal{O}_{\mathbb{P}(\mathcal{E})}(1)$ of the Grothendieck projectivization \(\mathbb{P}(\mathcal{E})\) 
is much weaker than the pseudo-effectivity of $\mathcal{E}$ in the sense of Definition \ref{def-psef}.
In the projective case, we have the following characterization of pseudo-effectivity  in terms of ample divisors.

\begin{prop}[{\cite[Proposition 2.4]{Mat23}}]
Let \(\mathcal{E}\) be a torsion-free sheaf on a normal projective variety \(X\).
Then, conditions \textup{(a)} and \textup{(b)} below are equivalent.
Moreover, if \(\mathcal{E}\) is locally free, then they are also equivalent to condition \textup{(c)}.
\begin{itemize}
\item[$($a$)$] \(\mathcal{E}\) is pseudo-effective in the sense of Definition \ref{def-psef}.
\item[$($b$)$] There exists an ample line bundle \(A\) such that, for every \(m\in\mathbb{Z}_{+}\),
the reflexive hull \(\textup{Sym}^{[m]}\mathcal{E}\otimes A\) is generically globally generated.
\item[$($c$)$] The non-nef locus of the tautological line bundle
\(\mathcal{O}_{\mathbb{P}(\mathcal{E})}(1)\) does not dominate \(X\)
under the natural projection \(\mathbb{P}(\mathcal{E}) \to X\).
\end{itemize}
\end{prop}

We refer the reader to \cite{BKK, IMZ23, Mat23} and the references therein for the basic properties of pseudo-effective sheaves.

For a compact K\"ahler \textit{manifold} $X$ with pseudo-effective tangent bundle,
the works \cite{HIM22,MQ26} show that, after passing to a finite \'etale cover, 
the manifold $X$ admits a smooth maximal rationally connected $($MRC$)$ fibration
$X \to Y$ onto a complex torus $Y$.
As posed in \cite{Mat23}, these results naturally lead to the following problems.

\begin{prob}\label{prob-smooth}
Let $X$ be a compact K\"ahler manifold with pseudo-effective tangent bundle.

\begin{enumerate}
\item[(1)] Let $X \to Y$ be the smooth MRC fibration of $X$ described above.
Is $X \to Y$ locally trivial?

\item[(2)] If moreover $X$ is rationally connected, 
is the anti-canonical divisor $-K_{X}$ big?
\end{enumerate}
\end{prob}

Let us recall why these problems are natural.
For \textup{(1)}, if the tangent bundle $T_X$ is singular positively curved,
that is, if it admits a singular semi-positively curved Hermitian metric, 
then the MRC fibration is locally constant (see \cite{Mul25,MQ26,HIM22}), which is stronger than local triviality.
On the other hand, local constancy cannot be expected under the mere pseudo-effectivity, 
as shown by the examples in \cite{HIM22}.
Nevertheless, local triviality remains open.

For \textup{(2)}, the work \cite{DPS94} shows that
a rationally connected manifold with nef tangent bundle is Fano,
and it is expected to be rational homogeneous by the Campana--Peternell conjecture.
It is thus natural to ask whether pseudo-effectivity of the tangent bundle
still forces some weaker form of Fano or homogeneous behavior.
A rationally connected manifold $X$ with pseudo-effective tangent bundle need not be Fano.
Moreover, \cite[Examples 6.4 and 6.5]{MZ} show that such a manifold $X$
is not necessarily of Fano type or almost homogeneous.
Nevertheless, it is still reasonable to expect that the anticanonical divisor $-K_X$ is big.

\smallskip
The main purpose of this paper is to study questions concerning singular compact K\"ahler \textit{varieties}.
In this generality, even the existence of an everywhere defined MRC fibration is not immediate,
and the structure of such a fibration, if exists, is much less understood than in the smooth case.
This leads us to the following problem.

\begin{prob}\label{prob-klt}
Let $X$ be a projective klt variety (or more generally, a compact klt K\"ahler variety) with pseudo-effective tangent sheaf.
\begin{enumerate}[label=\textup{(\arabic*)}]
\item Does there exist an everywhere defined MRC fibration $X \to Y$?

\item If such a fibration exists, what can one say about the base $Y$, the general fibers,
and the variation of the fibers?
\end{enumerate}
\end{prob}

Our first main result gives an affirmative answer 
for compact K\"ahler varieties with quotient singularities.

\begin{theo}\label{thm-quot}
Let $X$ be a compact K\"ahler variety with quotient singularities whose tangent sheaf is pseudo-effective.
Then, after replacing $X$ with a finite quasi-\'etale cover, there exists a fibration
$
\alpha \colon X \to Y
$
satisfying the following properties:
\begin{enumerate}
\item[$(1)$] The fibration $\alpha$ is flat, and
every irreducible component of the singular locus of $X$ dominates $Y$, if the singular locus is non-empty.
\item[$(2)$] The base $Y$ is an \'etale quotient of a complex torus, that is, there exists a finite \'etale cover $A \to Y$ by a complex torus $A$. 
\item[$(3)$] Every fiber of $\alpha$ is an irreducible, reduced, rationally connected klt projective variety, and a very general fiber has pseudo-effective tangent sheaf. 
\end{enumerate}
\end{theo}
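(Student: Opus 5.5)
The plan is to exploit the fact that a variety with quotient singularities is, locally, the quotient of a smooth variety by a finite group. This lets us treat $X$ as a compact Kähler orbifold $\mathcal{X}$ whose orbifold tangent bundle $T_{\mathcal{X}}$ is locally free and pseudo-effective. Pseudo-effectivity transfers because the orbifold tangent sheaf agrees with $T_X$ on $X_{\reg}$, the singular locus has codimension $\geq 2$, and singular metrics extend across it in orbifold charts. The smooth results of \cite{HIM22,MQ26} would then be run in the orbifold category. Throughout, we are free to replace $X$ by finite quasi-étale covers.

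\textbf{Step 1: $K_X$ is not pseudo-effective unless $X$ is (up to cover) a torus quotient.} First, $-K_X$ is pseudo-effective, since $\det$ of a pseudo-effective sheaf is pseudo-effective. If $K_X\equiv 0$, then by the orbifold Beauville--Bogomolov decomposition (klt Kähler version), after a quasi-étale cover $X$ splits as a torus times Calabi--Yau and irreducible holomorphic symplectic factors. Pseudo-effectivity of $T_X$ excludes the latter two factors, since they have stable tangent sheaves with $c_1=0$ and non-vanishing $c_2\cdot\omega^{n-2}$. Hence $X$ is a torus, and we take $Y=X$.

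\textbf{Step 2: reduction to the case with rationally connected fibers.} Otherwise, $X$ is uniruled by the BDPP/Ou-type criterion for Kähler klt spaces. Consider the MRC fibration $X\dashrightarrow Y$. By the orbifold Graber--Harris--Starr theorem, $Y$ is not uniruled. The key positivity argument, following \cite{HIM22,MQ26}, is that the relative tangent foliation $\mathcal{F}\subset T_X$ defining the MRC fibration has a quotient $T_X/\mathcal{F}$ that is pseudo-effective, being a quotient of a pseudo-effective sheaf, and generically equals $\nu^*T_Y$. Pseudo-effectivity of $T_Y$ with $K_Y$ pseudo-effective forces $c_1(T_X/\mathcal{F})=0$ and numerical flatness of $T_X/\mathcal{F}$ on the orbifold. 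Numerical flatness, together with the argument that a foliation with numerically flat normal bundle and algebraic leaves yields a regular morphism, gives the following. The quotient is a flat orbifold bundle, so the foliation $\mathcal{F}$ is regular in orbifold charts and the MRC map is an everywhere-defined equidimensional morphism. Its base $Y$ has $T_Y$ numerically flat, so $Y$ is, up to a finite étale cover, a torus. This gives (2).

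\textbf{Step 3: the fiber properties.} Flatness follows from equidimensionality onto the smooth base $Y$ together with Cohen--Macaulayness of quotient singularities (miracle flatness). Since $\mathcal{F}$ is regular in orbifold charts, $X\to Y$ is locally an orbifold submersion. This gives that every fiber is reduced and has klt (quotient) singularities, and that the singular strata are transverse to the fibers, so each component of $\mathrm{Sing}(X)$ dominates $Y$. Irreducibility of the fibers follows after a Stein factorization and étale base change, which we absorb into the cover. Rational connectedness holds for general fibers, and extends to all fibers because degenerations of rationally connected klt fibers in a flat family with reduced, irreducible fibers remain rationally connected (Hacon--McKernan type). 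Projectivity of the fibers follows from $H^2(F,\mathcal{O})=0$. For pseudo-effectivity of a very general fiber, restrict the metrics $h_m$ to $T_F=\mathcal{F}|_F$, obtained as a quotient of $T_X|_F$ via the splitting coming from the flat normal bundle. This restriction works for very general $F$, since the singular sets of the metrics are countable unions of proper analytic sets.

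\textbf{Main obstacle.} The hardest step is Step 2, namely proving regularity of the MRC foliation on a singular space: showing that numerical flatness of $T_X/\mathcal{F}$ on the orbifold forces the foliation to be smooth in orbifold charts. My first approach is to pass to local uniformizing charts and run the smooth argument of \cite{MQ26} equivariantly. Extending numerical flatness across codimension-two loci would use the orbifold Kähler version of the Simpson correspondence (flat $=$ numerically flat with vanishing $c_2$).
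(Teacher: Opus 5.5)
There is a genuine gap in the passage from the foliation to the base. The sentence ``its base $Y$ has $T_Y$ numerically flat, so $Y$ is, up to a finite \'etale cover, a torus'' does not follow. Step~3 then relies on smoothness of $Y$ (for miracle flatness and for transversality of the singular strata), and that smoothness is never established. You work with orbifold bundles on $X$ itself and never fix a cover. So at best your argument yields an equidimensional map onto a normal base whose \emph{orbifold} tangent sheaf is flat, i.e.\ a quasi-\'etale torus quotient.

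This really happens. Let $A$ be an abelian surface and $X=(A\times\mathbb{P}^1)/\{\pm1\}$, with $-1$ acting by $(a,z)\mapsto(-a,-z)$.
\begin{itemize}
\item $X$ has $32$ isolated quotient singularities.
\item The product metric is invariant, so it descends and $T_X$ is pseudo-effective.
\item The MRC fibration is the holomorphic map $X\to A/\{\pm1\}$ onto the singular Kummer surface.
\end{itemize}
Everything in your Step~2 holds in orbifold charts. Yet $Y$ is singular and simply connected, so it has no \'etale torus cover. Moreover, $X\to Y$ is not flat over the nodes, and $X_{\sing}$ lies over the $16$ nodes, so it does not dominate $Y$. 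The statement only holds after the cover $A\times\mathbb{P}^1\to X$. Your proposal has no step that selects such a cover or shows that it repairs the base.

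The missing idea is to pass \emph{first} to a maximally quasi-\'etale cover. It exists by \cite{FO25}, and it keeps quotient singularities by Lemma~\ref{lem-quot}. The paper then proceeds as follows.
\begin{itemize}
\item Theorem~\ref{thm-flat} makes $\mathcal{Q}$ an honest numerically flat locally free sheaf on $X$, not merely an orbifold bundle; in the example it is not locally free at the $32$ points.
\item The regularity Claim and Theorem~\ref{thm-weak} give an equidimensional morphism $X\to Y'$.
\item The isomorphism $\pi_1(X)\cong\pi_1(Y')$ of \cite{Kol93} lets the flat bundle descend to $\mathcal{E}\cong\Omega^{[1]}_{Y'}$.
\item $Y'$ is klt by Lemma~\ref{lem-base}, so Zariski--Lipman \cite{GK14} forces $Y'$ to be smooth. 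Only then is it an \'etale torus quotient.
\item Properties (1) and (3) come from smoothness of $\widehat{X}\to Y$ for a functorial resolution, not from orbifold transversality.
\end{itemize}

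Two smaller points.
\begin{itemize}
\item ``The quotient is a flat orbifold bundle, so $\mathcal{F}$ is regular'' is a non sequitur: a locally free normal sheaf does not make $T_X\to\mathcal{Q}$ surjective. What is needed is the Lelong-number argument with the metrics on $\Lambda^{[s]}T_X[\otimes](\det\mathcal{Q}_0)^{\vee}$, run in orbifold charts. Your ``main obstacle'' paragraph correctly anticipates this, and it is what the paper does.
\item Turning a weakly regular foliation with algebraic leaves into a morphism on a singular space is a separate input. This is Theorem~\ref{thm-weak}, which uses strong $\mathbb{Q}$-factoriality; it is not a consequence of numerical flatness.
\end{itemize}
Step~1 is unnecessary.
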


The proof of Theorem \ref{thm-quot}  gives a slightly more precise geometric picture: 
There exists a resolution of singularities $\widehat{X} \to X$ such that the composite morphism
$
\widehat{X} \to X \to Y
$
is smooth.

In the case of (more general) klt singularities, the construction of such a well-behaved representative of the MRC fibration becomes substantially more delicate.
Nevertheless, we obtain the following structure theorem, conditional only on the expected existence of minimal models in numerical dimension zero.

\begin{theo}\label{thm-str}
Let $X$ be a compact klt K\"ahler variety with pseudo-effective tangent sheaf.
Assume that Conjecture \ref{conj-min} below holds.
Then, after replacing $X$ with a finite quasi-\'etale cover, there exists a fibration
$
X \to T
$
onto a complex torus $T$ such that a very general fiber $F$ is a rationally connected klt projective variety with pseudo-effective tangent sheaf.
\end{theo}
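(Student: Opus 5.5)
We will construct the fibration as a model of the MRC fibration, using Conjecture \ref{conj-min} to control the base. Let $\pi\colon \widehat X\to X$ be a resolution and $\widehat X \dashrightarrow Z$ the MRC fibration, with $Z$ a compact K\"ahler manifold chosen as a smooth model. The base is not uniruled (Graber--Harris--Starr, and its K\"ahler analogue via Campana--H\"oring--Peternell). The heart of the argument is to show that $K_Z$ is numerically trivial in a suitable sense, i.e.\ $\nu(K_Z)=0$.

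\textbf{Step 1: $\nu(K_Z)=0$.} The generic surjection $T_{\widehat X}\to \varphi^*T_Z$ (defined over a big open subset where the MRC map is a morphism) descends to a generically surjective map $\mathcal T_X\to \mathcal Q$ onto a torsion-free sheaf on a big open subset of $X$. Pseudo-effectivity is inherited by generically surjective quotients, so $\det$ of this quotient is a pseudo-effective class pulling back $-K_Z$ modulo exceptional and ramification contributions. Since $Z$ is not uniruled, $K_Z$ is pseudo-effective (Boucksom--Demailly--P\u{a}un--Peternell, and its K\"ahler version by Ou and Cao--H\"oring--Peternell). Combining $K_Z$ psef with $-\varphi^*K_Z+E$ psef, where $E$ is a controlled effective divisor, forces $\nu(K_Z)=0$. \emph{This is the main obstacle}: one must handle the indeterminacy of the MRC map and the non-smooth locus of $X$ carefully, so that the error divisor $E$ is exceptional or vertical. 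I would try first to work on a big open subset of $X$ where the MRC map is an equidimensional morphism, and use the negativity lemma to absorb $E$.

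\textbf{Step 2: the base becomes a torus.} By Conjecture \ref{conj-min}, $Z$ has a minimal model $Z'$ with $K_{Z'}\equiv 0$ and klt singularities. The Beauville--Bogomolov-type decomposition for klt K\"ahler spaces with numerically trivial canonical class (Bakker--Guenancia--Lehn) gives a quasi-\'etale cover splitting $Z'$ into a torus factor and factors with $\widetilde q=0$ (Calabi--Yau and irreducible holomorphic symplectic). We then show that these non-torus factors cannot occur: pulling back a pseudo-effective quotient of $\mathcal T_X$ yields a pseudo-effective quotient of their tangent sheaf. However, their tangent sheaves are stable with $c_1=0$ and are not numerically flat, which contradicts pseudo-effectivity (\cite{HIM22,Mat23}-type arguments).

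\textbf{Step 3: from the MRC map to a morphism.} Thus, after a quasi-\'etale cover of $X$, obtained from the \'etale covers of $Z'$ via normalized fiber products (using that rationally connected fibers are simply connected so covers are induced from the base), the MRC map goes to a torus $T$. A rational map from a normal variety with rational singularities to a torus extends to a morphism, since rational curves are contracted. This gives $X\to T$, and its very general fiber $F$ is rationally connected. It is klt because general fibers inherit klt singularities. Finally, $\mathcal T_F$ is pseudo-effective since $\mathcal T_X|_F\to \mathcal T_F$ is an isomorphism generically modulo the trivial normal bundle $\mathcal O_F^{\dim T}$, and for very general $F$ the restriction of a pseudo-effective sheaf stays pseudo-effective (restricting the metrics $h_m$). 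We then project to the quotient $\mathcal T_X|_F/\mathcal O^{\oplus}$, using the splitting induced by the flat torus directions.
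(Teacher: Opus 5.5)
\textbf{There is a genuine gap in Step 2, and Step 3 omits a needed verification.}

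Your Step 1 is essentially the paper's Proposition \ref{prop-flat}(2). The obstacle you flag is resolved there not by a negativity argument on a big open set but by Lemma \ref{lem-vanish}. Both $\det\mathcal{Q}$ and its dual $\pi_{[*]}\mathcal{O}_{\widetilde X}(\varphi^*K_Y)$ are pseudo-effective on $X$, so $c_1(\mathcal{Q})=0$, and this works even when $X$ is not $\mathbb{Q}$-factorial.

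The gap in Step 2 is this. The generically surjective map $T_X\to\mathcal{Q}$ only shows that $\mathcal{Q}$ is pseudo-effective \emph{on $X$}. That is a statement about a sheaf on $X$ (generically $\phi^*T_{T\times Z}$), not about the tangent sheaf of the Calabi--Yau/hyperk\"ahler factor $Z$ on $Z$. ``Pulling back'' goes the wrong way. You would have to \emph{descend} singular metrics on $\Sym^m\mathcal{Q}$ through a merely almost holomorphic map onto a singular base, and nothing in your proposal (or in \cite{HIM22,Mat23}) provides that.

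The paper never shows that $T_Z$ is pseudo-effective. Its argument runs as follows.
\begin{itemize}
\item It makes $X$ maximally quasi-\'etale, so Theorem \ref{thm-flat} shows that $\mathcal{Q}$ built from $\Omega_Z$ is a numerically flat vector bundle on $X$.
\item Hence $\mathcal{Q}$ is given by a representation $\rho$ of $\pi_1(X)\cong\pi_1(T)\times\pi_1(Z)$. This isomorphism comes from \cite{Tak03} and \cite{Kol93}, using rational connectedness of the fibers.
\item By \cite[Proposition 4.1]{MZ}, which is where pseudo-effectivity of $T_X$ is used a second time, $\rho|_{\pi_1(Z)}$ has virtually abelian image. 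Since $\widehat q(Z)=0$, this image is finite.
\item After an \'etale cover of $Z$, the bundle is pulled back from $T$ and is trivial on $T\times\{z\}$, hence trivial.
\item So $\mathcal{Q}^\vee\subset\Omega_X^{[1]}$ contributes $\dim Z$ extra independent holomorphic $1$-forms, contradicting $q(X)=\dim T$.
\end{itemize}

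In Step 3, the Beauville--Bogomolov--Yau cover $T\times Z\to Y_{\min}$ is only quasi-\'etale, branched over $Y_{\min,\sing}$. Its normalized pull-back to $X$ is therefore quasi-\'etale only if $X\dashrightarrow Y_{\min}$ sends no divisor of $X$ into $Y_{\min,\sing}$. Simple connectedness of the fibers plays no role here.

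The paper proves $\codim\pi(\varphi^{-1}\alpha^{-1}(Y_{\min,\sing}))\geq 2$ from Proposition \ref{prop-flat}(2) applied on the smooth model $Y$:
\begin{itemize}
\item $K_Y\equiv N_Y$, with $N_Y$ effective and supported on the $\alpha$-exceptional locus, which contains $\alpha^{-1}(Y_{\min,\sing})$ because $Y_{\min}$ is terminal;
\item $\varphi^*N_Y$ is $\pi$-exceptional.
\end{itemize}
It then extends the cover from $X\setminus I_\phi$ to all of $X$ using $\pi_1(X_{0,\reg})\cong\pi_1(X_{\reg})$.

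Likewise, before applying Proposition \ref{prop-flat} to $X\dashrightarrow Z$, you must check that this map is almost holomorphic. The paper does this using rational connectedness of the exceptional divisors of the graph together with non-uniruledness of $Z$.
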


After the submission of our paper for a possible publication in the proceeding of AGEA2025, we obtain the full solution to the following conjecture so that Theorem \ref{thm-str} holds unconditionally. This will be included in our  forthcoming paper \cite{MZ}.

\begin{conj}\label{conj-min}
Let $Y$ be a compact K\"ahler manifold such that the numerical dimension of the canonical divisor $K_Y$ is zero. 
Then, there exists a bimeromorphic map
$
Y \dashrightarrow Y_{\min}
$
onto a compact klt K\"ahler variety $Y_{\min}$ such that $K_{Y_{\min}}$ is numerically trivial.
\end{conj}

For the numerical dimension of a (not necessarily nef) \((1,1)\)-class in the analytic case, we refer to \cite[(18.13) Definition]{Dem12}. 

Since Conjecture \ref{conj-min} is known in the projective case (see, e.g., \cite[Theorem 1.2]{Gon11} and \cite{Gon13}), 
and in low-dimensional K\"ahler cases,  
see \cite{CHP16, DHP24,DH25} as well as \cite[Corollary 5.9 and Theorem 5.11]{Wan21},
the following corollary follows immediately. 

\begin{cor}\label{cor-str}
Let $X$ be a klt compact K\"ahler variety with pseudo-effective tangent sheaf.
Assume further that $X$ is projective or that $\dim X \leq 4$.
Then, the conclusion of Theorem \ref{thm-str} holds.
\end{cor}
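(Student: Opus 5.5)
The corollary is a direct consequence of Theorem \ref{thm-str}: it suffices to check that the only extra hypothesis of that theorem, Conjecture \ref{conj-min}, holds in every situation where Theorem \ref{thm-str} actually uses it. So the first step is to pin down how the conjecture enters the proof. It is applied to a compact K\"ahler manifold $Y$ with $\nu(K_Y)=0$, namely a resolution of the base of the MRC fibration (or of a suitable quotient) of $X$. We will therefore track two things for that $Y$: whether it is projective when $X$ is, and how large $\dim Y$ can be.

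\emph{Projective case.} If $X$ is projective, every variety produced along the way is projective: the base of the MRC fibration, its resolution $Y$, and any finite quasi-\'etale covers. Conjecture \ref{conj-min} for projective $Y$ with $\nu(K_Y)=0$ is then the theorem of Gongyo \cite[Theorem 1.2]{Gon11}, \cite{Gon13}. One runs a $K_Y$-MMP with scaling, which terminates in numerical dimension zero. The resulting klt model $Y_{\min}$ has $K_{Y_{\min}}$ nef with $\nu=0$, hence numerically trivial.

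\emph{Case $\dim X\le 4$.} The base $Y$ of the MRC fibration satisfies $\dim Y\le \dim X\le 4$. Conjecture \ref{conj-min} is known for K\"ahler manifolds in this range:
\begin{itemize}
\item dimension $\le 2$ is classical;
\item dimension $3$ follows from the K\"ahler MMP \cite{CHP16};
\item dimension $4$ follows from \cite{DHP24,DH25} together with \cite[Corollary 5.9 and Theorem 5.11]{Wan21}.
\end{itemize}
Those results give a bimeromorphic model with $K$ nef and $\nu=0$, which is numerically trivial. If $\dim Y=\dim X$, then $X$ is not uniruled. In that case $K_X$ pseudo-effective combined with the pseudo-effectivity of $T_X$ should give $K_X\equiv 0$ directly, but this case is in any event covered by the same references.

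In both cases the hypothesis of Theorem \ref{thm-str} is satisfied for the variety $Y$ where it is needed, and the conclusion follows. The only delicate point, and the expected main obstacle, is verifying that the proof of Theorem \ref{thm-str} invokes Conjecture \ref{conj-min} solely for a $Y$ that is projective when $X$ is and has dimension at most $\dim X$. Part of this is confirming that passing to quasi-\'etale covers preserves projectivity and dimension, which is clear. If instead the proof applies the conjecture to an auxiliary manifold of larger dimension, the dimension bound would need to be rechecked.
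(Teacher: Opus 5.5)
Your proposal is correct and matches the paper's argument. Conjecture \ref{conj-min} enters the proof of Theorem \ref{thm-str} only once, applied to the compact K\"ahler base $Y$ of the MRC fibration, which has $\dim Y\le\dim X$ and is projective when $X$ is (being meromorphically dominated by a projective variety, it is Moishezon and K\"ahler, hence projective); there the conjecture is supplied by \cite{Gon11,Gon13} in the projective case and by \cite{CHP16,DHP24,DH25,Wan21} when $\dim X\le 4$, exactly as in Remark \ref{rem-minimal}.
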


Finally, we relate our structure theorems to varieties \(X\) admitting an int-amplified endomorphism \(f\), i.e., \(f^*[\omega]-[\omega]\) contains a K\"ahler form for some K\"ahler class \([\omega]\). 
Let $X$ be a ($\mathbb{Q}$-factorial) projective klt variety with pseudo-effective tangent sheaf.
By running the minimal model program, we obtain a sequence of divisorial contractions, flips, and Fano contractions
$$
X\coloneqq X_{0} \dashrightarrow X_{1} \dashrightarrow X_{2} \dashrightarrow \cdots \dashrightarrow X_{N}
$$
such that each $X_i$ has klt singularities and pseudo-effective tangent sheaf. 
The final model $X_N$ is a quasi-\'etale quotient of an abelian variety (see \cite{Mat23}).

On the other hand, there is a similar phenomenon for projective varieties or compact K\"ahler varieties admitting an int-amplified endomorphism.
The existence of an int-amplified endomorphism gives strong restrictions on Albanese maps and MRC fibrations, as well as the equivariant minimal model program (see, for example, \cite{Meng20,MZ20,MY21,Yos21,Zho21}).
Thus, such endomorphisms can be viewed as a dynamical analogue of positivity. 
In a recent joint work of the authors \cite{MZ}, 
we show that varieties admitting an int-amplified endomorphism have pseudo-effective tangent sheaves.
The converse, however, is false in general (cf.~\cite[Corollary 1.4]{Yos21} and \cite[Example 6.5]{MZ}). 
This motivates us to pose the following question. 

\begin{prob}\label{prob-int}
Which parts of the structure theory for varieties admitting an int-amplified endomorphism remain valid
under the weaker assumption that the tangent sheaf is pseudo-effective?
\end{prob}

As a consequence of Theorems \ref{thm-quot}, \ref{thm-str} and \cite[Theorems 1.1 and 1.4]{MZ}, we obtain a structure theorem for a normal compact K\"ahler variety admitting an int-amplified endomorphism, provided either (1) \(X\) has only quotient singularities, or (2) \(X\) is klt and Conjecture \ref{conj-min} holds.
Moreover, if such a holomorphic MRC fibration exists, it is flat and every fiber is reduced and irreducible (see \cite[Lemmas 2.6 and 5.2]{Meng20}).

For the general picture, we conclude the introduction with the following question, an affirmative answer to which will establish the desired structure for klt compact K\"ahler varieties admitting an int-amplified endomorphism (see Corollary \ref{cor-int}).
\begin{ques}\label{q_mrc}
Let \(f\colon X\to X\) be a surjective endomorphism of a klt compact K\"ahler variety.
Does there exist a suitable MRC fibration \(\pi\colon X\dashrightarrow Y\) which is \(f\)-equivariant, that is, there exists a surjective endomorphism \(g\) on \(Y\) such that \(\pi\circ f^s=g\circ\pi\) for some integer \(s\)?
\end{ques}
The question has a positive answer when \(X\) is projective; see \cite[Theorem 4.19]{Nak10}.
However, it is still widely open in the K\"ahler setting.

\subsection*{Acknowledgements}
The first author is grateful to the organizers of the conference ``2025 Algebraic Geometry in East Asia,'' where a question raised during the discussion motivated him to revisit the study of pseudo-effective tangent sheaves.
He is also grateful to Professors Shou Yoshikawa and Kiwamu Watanabe
for discussions related to Problem \ref{prob-smooth} \textup{(2)}.
The second author would like to thank IBS-CCG for providing an excellent working environment and generous support over the past four years (2022-2026).
The authors thank the referee for the suggestions to improve the paper.

\section{Preliminary results}
In this section, we collect some preliminary results.
Although these results may be known to experts, we include the proofs for the reader's convenience.
For the basic notation and conventions involved, we refer the reader to \cite{HIM22,IMZ23,MQ26,CDM} and the references therein.

\begin{lem}\label{lem-Weil}
Let $X$ be a normal analytic variety $($that is, a normal, irreducible, and reduced analytic variety$)$
and let $\mathcal{L}$ be a reflexive sheaf of rank $1$ on $X$. 
Then, locally on $X$, there exists an effective Weil divisor $D$
such that $\mathcal{L} \cong \mathcal{O}_{X}(D)$.
\end{lem}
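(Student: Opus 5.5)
Fix a point $x\in X$. The plan is to construct a nonzero section of $\mathcal{L}$ near $x$, read off its divisor of zeros on the smooth locus, and then use reflexivity to turn this into an isomorphism $\mathcal{L}\cong\mathcal{O}_X(D)$ on all of a neighbourhood of $x$. We work on a small Stein open neighbourhood $U$ of $x$ throughout, and use three standard facts: the singular locus $X_{\sing}$ has codimension $\geq 2$; the non-locally-free locus $Z$ of $\mathcal{L}$ has codimension $\geq 2$; and reflexive sheaves on a normal space are determined by their restriction to the complement of a codimension $\geq 2$ analytic subset, i.e. $\mathcal{F}=j_*(\mathcal{F}|_{U\setminus S})$ for $j\colon U\setminus S\hookrightarrow U$.

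\textbf{Constructing a section.} Since $U$ is Stein and $\mathcal{L}$ is coherent, Cartan's Theorem A gives a section $s\in H^0(U,\mathcal{L})$ whose germ at $x$ is nonzero. Because $\mathcal{L}$ is torsion free of rank $1$ and $U$ is irreducible (we may shrink $U$ so that it is locally irreducible at $x$, using normality), $s$ does not vanish identically on any nonempty open subset of $U$.

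\textbf{Defining the divisor.} Put $S=(X_{\sing}\cup Z)\cap U$ and $U^\circ=U\setminus S$. On $U^\circ$ the sheaf $\mathcal{L}$ is a line bundle on a manifold, so $s|_{U^\circ}$ has a divisor of zeros $D^\circ=\operatorname{div}(s|_{U^\circ})$, which is effective. Multiplication by $s$ gives an isomorphism $\mathcal{O}_{U^\circ}(D^\circ)\cong\mathcal{L}|_{U^\circ}$. Let $D$ be the closure of $D^\circ$ in $U$. By Remmert--Stein (as $\codim S\geq 2$), $D$ is an analytic subset, hence an effective Weil divisor on $U$.

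\textbf{Extending the isomorphism.} Both $\mathcal{O}_U(D)$ and $\mathcal{L}|_U$ are reflexive of rank $1$ and agree on $U^\circ$. Since $\codim S\geq 2$ and $U$ is normal, each equals $j_*$ of its restriction to $U^\circ$, so the isomorphism on $U^\circ$ extends uniquely to $\mathcal{O}_U(D)\cong\mathcal{L}|_U$.

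\textbf{Main obstacle.} The only delicate point is the extension step: we must justify that $j_*$ of a reflexive sheaf restricted off a codimension $\geq 2$ set recovers the sheaf on a normal complex space. This is the analytic Hartogs-type property; it rests on normality (Riemann extension) together with the description of a reflexive sheaf as the kernel of a map between locally free sheaves. Everything else is routine.
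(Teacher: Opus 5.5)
Your proposal is correct and follows the same route as the paper: use Cartan's Theorem A on a small Stein neighbourhood to get a nonzero section $s$ of $\mathcal{L}$, take $D$ to be its divisor, and conclude $\mathcal{L}\cong\mathcal{O}_X(D)$ by reflexivity. The paper does the extension across the codimension-$2$ set $X_{\sing}\cup Z$ implicitly, and it chooses $s$ to generate $\mathcal{L}$ at $x$. Your version makes the extension explicit and only uses $s_x\neq 0$, which already suffices.
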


\begin{proof}
The assertion is local on $X$, and hence we may assume that $X$ is Stein.
By Cartan's theorem A, the sheaf $\mathcal{L}$ is generated by its global sections.
Hence, after shrinking $X$ if necessary, we can take a non-zero section
$
s \in H^0(X,\mathcal{L})
$
which generates $\mathcal{L}$ at a given point.
This determines the Weil divisor $D$ and 
the associated divisorial sheaf $\mathcal{O}_{X}(D)$ coincides with $\mathcal{L}$
since $\mathcal{L}$ is a reflexive sheaf of rank $1$.
\end{proof}

\begin{lem}\label{lem:q-f}
Let $X$ be a normal analytic variety.
Then \(X\) has quotient singularities if and only if, locally, there is a finite quasi-\'etale morphism \(Y\to X\) from a smooth variety \(Y\).
\end{lem}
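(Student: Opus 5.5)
Both implications are local on $X$, and I will treat them separately, relying on the standard definition: $X$ has quotient singularities if every point $x\in X$ has an open neighbourhood isomorphic to $V/G$. Here $V\subseteq\mathbb{C}^n$ is an open set and $G\subseteq \GL_n(\mathbb{C})$ is a finite group preserving $V$.

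\emph{The direct implication.} Suppose that near $x$ we have $X\cong V/G$. By the Chevalley--Shephard--Todd theorem, the subgroup $H\subseteq G$ generated by pseudo-reflections is normal, and $V/H$ is again smooth: the invariant ring of a group generated by pseudo-reflections is a polynomial ring, and after shrinking $V$ to a $G$-stable ball around $0$ the quotient $V/H$ is an open subset of $\mathbb{C}^n$. The group $G/H$ acts on $V/H$, and this action contains no pseudo-reflections, since every pseudo-reflection in $G$ already lies in $H$. Consequently the fixed locus of each non-trivial element of $G/H$ on $V/H$ has codimension at least $2$. It follows that $V/H\to V/G\cong X$ is a finite morphism from a smooth variety that is étale in codimension one, that is, quasi-étale. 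The point I need to check carefully is exactly this claim that $G/H$ has no pseudo-reflections on $V/H$. I would verify it through ramification indices of divisors: a prime divisor of $V$ has non-trivial inertia in $G$ precisely when some pseudo-reflection fixes it, and then its whole inertia group lies in $H$. Hence the map $V/H\to V/G$ is unramified at the generic points of all divisors.

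\emph{The converse.} Let $\pi\colon Y\to X$ be finite and quasi-étale near $x$, with $Y$ smooth. After shrinking, I pass to the Galois closure, which I realise as the normalization $\widetilde{Y}$ of $X$ in the Galois closure of the field extension $\mathbb{C}(Y)/\mathbb{C}(X)$; restricting to small neighbourhoods keeps everything local. Then $\widetilde{Y}\to X$ is a Galois cover with group $G$, and it is still quasi-étale, being étale over the big open set where $\pi$ is étale. The map $\widetilde{Y}\to Y$ is finite and étale in codimension one over the smooth variety $Y$. By purity of the branch locus (Zariski--Nagata, in its analytic version), $\widetilde{Y}\to Y$ is étale, so $\widetilde{Y}$ is smooth. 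Now take $y\in\widetilde{Y}$ over $x$ with stabiliser $G_y$. Near $x$ we have $X\cong \widetilde{Y}/G_y$ locally near $y$. By Cartan's linearization lemma, $G_y$ acts linearly in suitable local coordinates around $y$, and this exhibits $x$ as a quotient singularity.

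I expect the main obstacle to lie in making the analytic Galois-closure construction precise. The cleanest way is to work with the finite étale cover of the big open set $X^\circ\subseteq X$ over which $\pi$ is étale. I take its Galois closure as a topological covering of $X^\circ$, which is defined because local fundamental groups behave well after shrinking. I then extend it to a finite normal cover of $X$ using the Grauert--Remmert extension theorem, and finally apply purity over $Y$ as above.
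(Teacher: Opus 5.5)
Your proposal is correct and follows essentially the same route as the paper: Chevalley--Shephard--Todd to pass to a smooth cover whose group has no pseudo-reflections for the forward direction, and Galois closure followed by purity of the branch locus for the converse. You also spell out what the paper leaves implicit or cites. This includes the inertia-group check that $V/H\to V/G$ is quasi-\'etale (the paper's phrase ``\'etale quotient'' is loose here, since $V\to V/H$ is not \'etale), the analytic construction of the Galois closure via Grauert--Remmert in place of \cite[Theorem 3.7]{GKP16}, and Cartan linearization at the last step.
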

\begin{proof}
Suppose first that \(X\) has  quotient singularities. 
Then we may assume that $X$ is sufficiently small 
 so that there exists a finite cover
$\tau \colon Y \to X\cong Y/G$ from a smooth variety $Y$ where \(G\) is a finite group acting on \(Y\).
By the classical Chevalley-Shephard-Todd theorem, with \(Y\) replaced by an \'etale quotient, we may further assume that \(G\) contains no pseudo-reflections so that \(\tau\) is quasi-\'etale. 

Suppose second that there is a finite quasi-\'etale morphism \(Y\to X\) from a smooth variety \(Y\).
We take the Galois closure \(\widetilde{Y}\to Y\to X\) which is still quasi-\'etale (see \cite[Theorem 3.7]{GKP16}).
By purity of the branch locus, \(\widetilde{Y}\to Y\) is \'etale and hence \(\widetilde{Y}\) is smooth; in particular, \(X\) has quotient singularities.
\end{proof}

\begin{lem}\label{lem-quot}
Let $X$ be a normal analytic variety with quotient singularities.
Let $X' \to X$ be a finite quasi-\'etale cover of $X$.
Then $X'$ also has quotient singularities.
\end{lem}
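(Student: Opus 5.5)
We use the characterization in Lemma \ref{lem:q-f}: $X$ has quotient singularities if and only if, locally, it admits a finite quasi-\'etale morphism from a smooth variety. Let $\nu\colon X' \to X$ be the given finite quasi-\'etale cover and fix a point $x' \in X'$. We want a finite quasi-\'etale morphism onto a neighborhood of $x'$ whose source is smooth.

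First choose local data on $X$. Set $x=\nu(x')$. By Lemma \ref{lem:q-f}, after shrinking $X$ to a small neighborhood $U$ of $x$, there is a finite quasi-\'etale morphism $\tau\colon Y \to U$ with $Y$ smooth. We may take $U$ small enough that $\nu^{-1}(U)$ splits into connected components, each containing exactly one preimage of $x$. Let $U'$ be the component containing $x'$; then $\nu|_{U'}\colon U' \to U$ is still finite and quasi-\'etale.

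Next form the normalized fiber product. Let $W$ be the normalization of an irreducible component of $Y \times_U U'$ that dominates $U'$, with projections $p\colon W \to Y$ and $q\colon W \to U'$. Both are finite. To see that they are quasi-\'etale, choose a closed subset $Z \subset U$ of codimension at least $2$ outside which $\tau$ and $\nu$ are both \'etale. Over $U\setminus Z$ the fiber product is \'etale over each factor, so it is already normal there and coincides with $W$. Hence $p$ and $q$ are \'etale outside $p^{-1}(\tau^{-1}(Z))$ and $q^{-1}(\nu^{-1}(Z))$ respectively. These loci have codimension at least $2$ because the maps are finite. So $p$ and $q$ are quasi-\'etale.

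Finally, apply purity of the branch locus. The map $p\colon W \to Y$ is a finite quasi-\'etale morphism from a normal space onto the smooth space $Y$, so by Zariski--Nagata purity it is \'etale. Therefore $W$ is smooth. Then $q\colon W \to U'$ is a finite quasi-\'etale morphism from a smooth variety onto a neighborhood of $x'$, and Lemma \ref{lem:q-f} gives that $X'$ has a quotient singularity at $x'$.

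The main obstacle is the purity step, specifically justifying Zariski--Nagata purity for finite maps in the analytic setting. The cleanest route is through local fundamental groups: since $Y$ is smooth, the complement of a codimension-$2$ subset of a small ball in $Y$ is simply connected, so the unramified cover $p$ over it is trivial, and it extends to an \'etale map by normality of $W$. This is the same purity argument already used in the proof of Lemma \ref{lem:q-f}.
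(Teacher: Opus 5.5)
Your proposal is correct and follows essentially the same route as the paper. Both arguments take a local smooth quasi-\'etale cover $Y\to X$ from Lemma~\ref{lem:q-f}, normalize a dominant component of $Y\times_X X'$, and use purity of the branch locus to see that this normalization is \'etale over $Y$, hence smooth. Both then conclude with the converse direction of Lemma~\ref{lem:q-f}. You also spell out two points the paper leaves implicit: why both projections are quasi-\'etale, and how analytic purity follows from the simple connectedness of the complement of a codimension-two subset in a small ball.
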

\begin{proof}
Let $\nu \colon X' \to X$ be the given finite quasi-\'etale cover.
Since \(X\) has only quotient singularities, by Lemma \ref{lem:q-f}, we may assume that $X$ is sufficiently small 
 so that there exists a finite quasi-\'etale cover
$\tau \colon Y \to X$ from a smooth variety $Y$.
Let $Z$ be the normalization of a main component of the fiber product $Y \times_X X'$.
We have the following commutative diagram:
\[
\xymatrix{
Z \ar[r]^{\mu} \ar[d]_{\tau'} & Y \ar[d]^{\tau} \\
X' \ar[r]_{\nu} & X .
}
\]
By construction, the induced morphism $\mu \colon Z \to Y$ is a finite quasi-\'etale cover.
Since $Y$ is smooth, the purity of the branch locus implies that $\mu$ is \'etale.
This shows that $Z$ is smooth as well.
Since \(\tau'\) is also quasi-\'etale, again by Lemma \ref{lem:q-f}, the variety \(X'\) has quotient singularities.
\end{proof}

\begin{lem}\label{lem-base}
Let $\phi \colon X \to Y$ be an equidimensional projective fibration between normal analytic varieties.
Then, the following statements hold:
\begin{itemize}
\item[$(1)$] If $X$ is strongly 
$\mathbb{Q}$-factorial, then so is $Y$.
\item[$(2)$] If $X$ has klt singularities and $K_Y$ is $\mathbb{Q}$-Cartier,
then $Y$ has klt singularities.
\end{itemize}
\end{lem}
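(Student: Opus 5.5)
The plan is to use the equidimensionality of $\phi$ to push Weil divisors down and pull them back, and then treat the two assertions separately: (1) by descending Cartier divisors along $\phi$, and (2) by comparing discrepancies through a canonical bundle formula.

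\textbf{Assertion (1).} Let $D$ be a prime Weil divisor on an open subset $V\subseteq Y$. Because $\phi$ is equidimensional, $\phi^{-1}(D)$ has pure codimension one in $\phi^{-1}(V)$. Hence the pullback $\phi^*D$ is a well-defined Weil divisor: define it over the smooth locus of $Y$, where $D$ is Cartier and the codimension-two set is irrelevant, and take the closure. Strong $\mathbb{Q}$-factoriality of $X$ gives $m>0$ such that $m\phi^*D$ is Cartier near the fiber $\phi^{-1}(y)$ over a point $y\in D$. The task is then to descend this Cartier divisor.

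\textbf{Descent step.} After shrinking $V$ to a small Stein neighbourhood of $y$, the goal is that $\mathcal{O}_X(m\phi^*D)$ is trivial near $\phi^{-1}(y)$. By Lemma \ref{lem-Weil} we may write $\mathcal{O}_Y(mD)$ locally. Note that $\phi_*\mathcal{O}_X(m\phi^*D)$ is a reflexive rank one sheaf agreeing with $\mathcal{O}_Y(mD)$ off codimension two (normality and equidimensionality), so $\phi_*\mathcal{O}_X(m\phi^*D)=\mathcal{O}_Y(mD)$.

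The key point is the restriction of the line bundle $L=\mathcal{O}_X(m\phi^*D)$ to fibers. Its restriction to a general fiber is trivial, and it is numerically trivial on all fibers, because $m\phi^*D$ can be moved by equidimensionality, as a limit of vertical divisors. To use this I would pass to a local multisection: take a general complete intersection $W\subseteq X$ of relatively ample divisors, finite and surjective over $V$. Then $\mathcal{O}_Y(mD)$ is Cartier if its pullback to the normalization of $W$ is Cartier near the preimage of $y$ and we can take norms. The norm $N_{W/V}(L|_W)$ is a line bundle on $V$ whose reflexive hull is $\mathcal{O}_Y(\deg(W/V)\, mD)$. So $\deg(W/V)\, mD$ is Cartier, and (1) follows.

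\textbf{Assertion (2).} With $K_Y$ $\mathbb{Q}$-Cartier, I would use an Ambro/Fujino–Gongyo type canonical bundle formula in the analytic setting. Cutting by general relatively ample hyperplanes, reduce to a generically finite situation: take $W$ as above, a normal variety with $W\to Y$ finite, such that $(W,\Delta_W)$ is klt with $K_W+\Delta_W=(K_X+H_1+\dots+H_k)|_W$ by adjunction and Bertini. Then apply the standard fact that for a finite surjective morphism $W\to Y$ with $K_W+\Delta_W=\pi^*(K_Y+\Delta_Y)$, where $\Delta_Y$ is defined by the ramification (Hurwitz) formula, the pair $(Y,\Delta_Y)$ is klt if and only if $(W,\Delta_W)$ is. Equidimensionality ensures that no divisor of $W$ is contracted, so the Hurwitz formula holds. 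Finally $\Delta_Y\ge0$ and $K_Y$ $\mathbb{Q}$-Cartier give that $Y$ is klt.

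\textbf{Main obstacle.} The delicate point is arranging $\Delta_Y$ to be effective and compatible, namely writing $(K_X+\sum H_i)|_W$ as a pullback from $Y$. This requires a relative linear equivalence that only holds up to a vertical correction. I would avoid it by comparing discrepancies directly on resolutions via $W\to Y$, as in Kollár–Mori Prop.~5.20.
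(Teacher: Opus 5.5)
Your part (1) is the paper's argument. You cut by general relative hyperplane sections to get a multisection $W$ finite over a small $V\subseteq Y$, restrict the Cartier divisor $m\phi^*D$ to $W$, and take norms; the paper cites \cite[Lemma 5.16]{KM98} for this step. The one point to state explicitly is $(\phi|_W)^*D=(\phi^*D)|_W$, which holds by equidimensionality and is what identifies the norm with $\mathcal{O}_Y(\deg(W/V)\,mD)$. Your remarks on fiberwise numerical triviality and on $\phi_*\mathcal{O}_X(m\phi^*D)$ are not needed.

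For (2) you take a different route, and it can be made to work, but the step as written fails. Since $\Delta_W=0$ for general hyperplane sections, asking for $\Delta_Y$ with $K_W=\pi^*(K_Y+\Delta_Y)$ requires the ramification divisor $R:=K_W-\pi^*K_Y$ to be a pull-back from $Y$. This is false for non-Galois covers, where components of $W$ over the same branch divisor can have different ramification indices. So "$\Delta_Y$ defined by the Hurwitz formula" does not exist in general. The obstacle you name, a relative linear equivalence for $(K_X+\sum H_i)|_W$, is not the real issue, and no canonical bundle formula is needed once you have cut down to $W$.

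The fix is to put the boundary on $W$ instead of on $Y$:
\begin{itemize}
\item $R\ge 0$ by Hurwitz, and $R$ is $\mathbb{Q}$-Cartier because $W$ is klt and $K_Y$ is $\mathbb{Q}$-Cartier.
\item $(W,-R)$ is klt, with non-effective boundaries allowed as in \cite{KM98}, since subtracting an effective $\mathbb{Q}$-Cartier divisor only raises discrepancies.
\item $K_W-R=\pi^*(K_Y+0)$, so \cite[Proposition 5.20]{KM98} shows directly that $Y$ is klt.
\end{itemize}
This is what your fallback "compare discrepancies as in Kollár--Mori 5.20" should amount to.

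The paper's route for (2) is different. It passes locally to an index-one cover $Y'\to Y$. This cover is quasi-\'etale, so by equidimensionality the induced covers of $X$ and of the multisection $Z$ are quasi-\'etale and stay klt; this makes $K_Y$ Cartier. Then $Z$ klt implies $Z$ has rational singularities, rationality descends along the finite map $Z\to Y$ \cite[Proposition 5.13]{KM98}, and rational singularities with Cartier $K_Y$ are canonical \cite[Corollary 5.24]{KM98}.

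Your corrected argument is shorter: one ramification-formula comparison along $W\to Y$ replaces the index-one cover and the detour through rational singularities. The paper's route, in exchange, uses \cite[Proposition 5.20]{KM98} only for quasi-\'etale maps, where neither a ramification divisor nor a sub-boundary appears.
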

\begin{proof}
(1)
Recall that $X$  is said to be \textit{strongly $\mathbb{Q}$}-factorial 
if for any reflexive sheaf $\mathcal{L}$ of  rank one, there is $m \in \mathbb{Z}_{+}$ 
such that $\mathcal{L}^{[\otimes m]}$ is an invertible sheaf (see \cite[Definition 2.2]{DH25}).
Let $\mathcal{L}$ be a reflexive sheaf of rank $1$ on $Y$.
The assertion is local on $Y$, 
so we may assume that $\mathcal{L}$ is a divisorial sheaf $\mathcal{O}_{Y}(D)$ by Lemma \ref{lem-Weil}.
Moreover, we may assume that $Y$ is a sufficiently small open set
so that $\phi \colon X \to Y$ is relatively embedded into the projection $\mathbb{P}^{N} \times Y \to Y$.
By taking suitable relative hyperplane sections $\{H_i\}_{i=1}^m$ on $X$,
we obtain a variety $Z \coloneqq X \cap H_1 \cap \cdots \cap H_m$
such that the induced morphism $\phi|_Z \colon Z \to Y$ is finite and surjective by equidimensionality.
Here $m \coloneqq  \dim X - \dim Y$.
The reflexive pull-back $\phi^{[*]}\mathcal{L}$ is then $\mathbb{Q}$-Cartier by assumption,
so $m_0 \phi^{*}(D)$ is a Cartier divisor for some $m_0 \in \mathbb{Z}_{+}$.

Since \(\phi\) is equidimensional, we can define the pull-backs $\phi^* D$ and $(\phi|_Z)^* D$ of Weil divisors, 
and $(\phi|_Z)^* D = (\phi^* D)|_Z$ holds. 
Since $m_0(\phi|_Z)^* D = m_0(\phi^* D)|_Z$ is Cartier and $\phi|_Z \colon Z \to Y$ is a finite morphism,
the Weil divisor $D$ is also $\mathbb{Q}$-Cartier by the norm argument in \cite[Lemma 5.16]{KM98}.

\smallskip
(2)
The conclusion is local on $Y$.
Taking an index-one cover $Y' \to Y$ of $Y$, which is quasi-\'etale,  and replacing $X$ (resp.\,$Z$) 
with the normalization of the main component of the fiber product $X \times_{Y} Y'$ (resp.\,$Z \times_{Y} Y'$),
we may assume that $K_Y$ is Cartier (cf.~\cite[Proposition 5.20]{KM98}). 
By construction, the variety $Z$ has klt singularities, and thus $Z$ has rational singularities.
Since the restriction $\phi|_{Z} \colon Z \to Y$ is a finite morphism, the base variety $Y$ also has rational singularities by \cite[Proposition 5.13]{KM98}.
Since $K_Y$ is Cartier, by \cite[Corollary 5.24]{KM98}, the variety $Y$ has canonical and hence klt singularities. 
\end{proof}

Lemma \ref{lem-vanish} and Theorem \ref{thm-flat} below are taken from the recent joint work of the first author, and will be used crucially  in the proof of our main results. 
Theorem \ref{thm-flat} is a flatness criterion for pseudo-effective sheaves, 
generalizing the corresponding result of \cite{HP19} from projective setting to K\"ahler varieties. 

\begin{lem}[{\cite[Proposition 4.7]{CDM}}]\label{lem-vanish}
Let $X$ be a normal compact analytic variety.
Let $\mathcal{L}$ be a rank-one torsion-free sheaf on $X$.
Assume that both $\mathcal{L}$ and its dual $\mathcal{L}^{\vee}$ are pseudo-effective as torsion-free sheaves.
Then, the first Chern class $c_{1}(\mathcal{L})$ vanishes.
\end{lem}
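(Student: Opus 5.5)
The idea is to pull everything back to a resolution and test $c_1(\mathcal{L})$ against curve-type classes built from Kähler forms. Since $c_1(\mathcal{L})$ lives in $H^2(X,\mathbb{R})$ (or $H^{1,1}$ on a resolution), it vanishes once it pairs to zero with $\omega^{n-1}$ for enough Kähler classes on a suitable model.

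Let $n=\dim X$ and take a resolution $\pi\colon \widehat X\to X$ that is an isomorphism over $X_0=X_{\reg}\cap X_{\mathcal L}$. Set $\widehat{\mathcal L}\coloneqq(\pi^*\mathcal L)^{\vee\vee}$, a line bundle on $\widehat X$ agreeing with $\mathcal L$ over $X_0$. Interpret $c_1(\mathcal L)$ so that it is determined by $\widehat{\mathcal L}$ up to classes supported on the exceptional locus $E$. By assumption there are singular metrics $h$ on $\mathcal L|_{X_0}$ and $h^\vee$ on $\mathcal L^\vee|_{X_0}$, with curvatures bounded below by $-\omega_X$.

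For the same $m$, the metric $h_m$ on $\mathcal L^{\otimes m}$ and the metric $h_m^\vee$ on $\mathcal L^{-m}$ have curvature currents $T_m\ge-\omega_X$ and $S_m\ge -\omega_X$ on $X_0$. Dividing by $m$ gives currents in the classes $\pm c_1(\mathcal L)$ with lower bound $-\omega_X/m$.

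The key step, and the main obstacle, is to extend these currents across the codimension-$\ge2$ set $X\setminus X_0$. They are quasi-psh potentials with lower-bounded Laplacian. Pulled back to $\widehat X$, they live on the complement of $E$ together with a codimension-two set of $\widehat X$. I would extend them as closed currents on $\widehat X$ with the same lower bound $-\pi^*\omega_X/m$, using Skoda--El Mir for closed almost-positive currents (the weights are locally quasi-psh and bounded above near codimension-two sets). The extended currents represent $\pm c_1(\widehat{\mathcal L})+\sum a_i[E_i]$ for some exceptional divisors $E_i$.

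Next, fix a Kähler form $\widehat\omega$ on $\widehat X$ and, for small $\varepsilon$, put $\omega_\varepsilon=\pi^*\omega_X+\varepsilon\widehat\omega$. I would pair with $\pi^*\omega_X^{\,n-1}$, which kills exceptional divisors over points of codimension $\ge2$. For the divisorial part of $E$ that is not contracted to codimension $\ge2$, use that $X\setminus X_0$ has codimension $\ge2$, so such components are exceptional and have zero intersection with $\pi^*\omega_X^{n-1}$. This yields
\[
\pm\int_X c_1(\mathcal L)\wedge\omega_X^{n-1}\ \ge\ -\tfrac1m\int_X\omega_X^n ,
\]
and letting $m\to\infty$ gives $c_1(\mathcal L)\cdot\omega^{n-1}=0$ for every Kähler class $\omega$ on $X$.

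Finally, to get vanishing of the class itself and not just of its degrees, consider the limiting currents $T=\lim T_m/m$ and $S=\lim S_m/m$ (weak limits after normalizing potentials). Both are positive, with $\{T\}=-\{S\}$, so $T+S$ is a positive current in the zero class and hence $T+S=0$. Then $T=-S$ is both positive and negative, so $T=0$ and $c_1(\mathcal L)=0$, at least on $X_0$; extension across codimension two again forces $c_1(\mathcal L)=0$ in $H^2(X,\mathbb{R})$ via $\widehat X$. The part requiring care is showing that the exceptional correction terms cannot carry nonzero class. I would handle this by running the positivity argument on $\widehat X$ with $\omega_\varepsilon$, letting $\varepsilon\to0$, and noting that $c_1$ on $X$ is defined through the reflexive data on $X_0$.
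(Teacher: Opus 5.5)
The paper gives no argument for this lemma; it only quotes \cite[Proposition 4.7]{CDM}. Your proposal therefore has to stand on its own, and it has genuine gaps at exactly the two places you flag.

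\emph{The extension step.} Near $X\setminus X_0$ the sheaf $\mathcal L$ has no local frame, so on $X$ there are no ``weights near codimension-two sets'' at all. On $\widehat X$ the set you must cross is $E=\pi^{-1}(X\setminus X_0)$, which in general has divisorial components. For a single metric $h_m$, nothing bounds the weight of $\pi^*h_m$ (in a frame of $\widehat{\mathcal L}^m$) from above near $E$. Locally finite mass of $\pi^*(T_m+\omega_X)$ near $E$ is precisely the hypothesis of Skoda--El Mir that must be verified, not something you get for free.

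\emph{The last step.} On $\widehat X$ the extended currents have classes $\pm c_1(\widehat{\mathcal L})$ plus exceptional classes with $m$-dependent coefficients. Consequently:
\begin{itemize}
\item $T+S$ need not lie in the zero class;
\item the weak limits $T,S$ are not known to exist, since you have no uniform mass bound;
\item ``$c_1(\mathcal L)=0$ in $H^2(X,\mathbb R)$'' has no meaning when $\mathcal L$ is not $\mathbb Q$-Cartier.
\end{itemize}
Your degree computation cannot substitute for this. Take the quadric cone $X\subset\mathbb P^4$ and planes $P_1,P_2$ from the two rulings. Then $\mathcal O_X(P_1-P_2)$ has degree $0$ against $H^2$, and $H^2(X,\mathbb R)=\mathbb R$, so all your degrees vanish. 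But the strict transform of $P_1-P_2$ restricts to $\mathcal O(1,-1)$ on the exceptional quadric, while $E|_E\cong\mathcal O(-1,-1)$, so it is not numerically $\pi$-exceptional. Being numerically $\pi$-exceptional is exactly the sense in which $c_1(\mathcal Q)=0$ is used in the proof of Proposition \ref{prop-flat}.

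\emph{The missing idea} is to use the two metrics \emph{together}, turning the metric data into a genuine function on $X_0$ as the paper does with $\log|\tau^m|_{h_m^\vee}$ in Claim \ref{claim-ref}. Let $g_m$ be the metric on $\mathcal L^{-m}|_{X_0}$ coming from the pseudo-effectivity of $\mathcal L^\vee$ (your $h_m^\vee$).
\begin{itemize}
\item The product $h_m\otimes g_m$ is a singular metric on $\mathcal L^{m}\otimes\mathcal L^{-m}|_{X_0}\cong\mathcal O_{X_0}$. It equals $e^{-\psi_m}$ for a quasi-psh function $\psi_m$ with $dd^c\psi_m=T_m+S_m\geq -2\omega_X$ on $X_0$.
\item This function extends across $X\setminus X_0$ by \cite{GR56}.
\item Hence $0\le T_m+\omega_X\le dd^c\psi_m+2\omega_X$ on $X_0$, the difference being $S_m+\omega_X\ge 0$.
\item Pulling back, the mass of $\pi^*(T_m+\omega_X)$ on $\widehat X\setminus E$ is at most $2\int_{\widehat X}\pi^*\omega_X\wedge\widehat\omega^{n-1}$, independent of $m$. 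So Skoda--El Mir applies and gives a closed positive current $R_m$ on $\widehat X$.
\item The class of $R_m$ is $m\,c_1(\widehat{\mathcal L})+\pi^*\{\omega_X\}+\sum_i a_i^{(m)}\{E_i\}$. The correction restricts to zero on $\widehat X\setminus E$, so it lies in the image of $H^2_E(\widehat X,\mathbb R)$, which is spanned by the divisorial components of $E$.
\item Since $R_m/m\to 0$ weakly, $c_1(\widehat{\mathcal L})$ is a limit of classes in the finite-dimensional span of the $\{E_i\}$, hence lies in it.
\end{itemize}
This replaces both your limiting-current argument and your final paragraph. Your degree step then becomes a special case and is no longer needed.
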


\begin{thm}[{\cite[Theorem 1.1]{CDM}}]\label{thm-flat}
Let \(X\) be a maximally quasi-\'etale compact klt K\"ahler variety.
Assume that \(\mathcal{E}\) is a pseudo-effective reflexive sheaf with vanishing first Chern class \(c_1(\mathcal{E})=0\).
Then \(\mathcal{E}\) is locally free and numerically flat on $X$.
\end{thm}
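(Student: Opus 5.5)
The plan is to reduce the statement to the Kähler version of the characterization of flat sheaves on maximally quasi-étale klt spaces. That characterization says: a reflexive sheaf that is semistable with vanishing first and second Chern classes, in the appropriate orbifold/$\mathbb{Q}$-sense, is locally free and is an extension of Hermitian flat bundles. Such an extension is numerically flat.

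\textbf{Step 1 (semistability).} First I would show that $\mathcal{E}$ is slope semistable with respect to every Kähler class $[\omega]$. Let $\mathcal{E}\to\mathcal{Q}$ be a torsion-free quotient of rank $r$. Pseudo-effectivity passes to quotients, since the singular metrics $h_m$ on $\Sym^m\mathcal{E}$ induce quotient metrics with the same lower curvature bound. Taking determinants, $\det\mathcal{Q}=(\Lambda^r\mathcal{Q})^{\vee\vee}$ is a pseudo-effective rank-one reflexive sheaf. Hence $c_1(\mathcal{Q})\cdot[\omega]^{n-1}\geq 0=\mu_\omega(\mathcal{E})$, which gives semistability for every Kähler class. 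Running the same argument on the first term of a Jordan--Hölder filtration, every stable graded piece $\mathcal{G}_i$ has $c_1(\mathcal{G}_i)\cdot[\omega]^{n-1}=0$. I would then upgrade this to $c_1(\mathcal{G}_i)=0$ using Lemma~\ref{lem-vanish}: the determinant of the quotient $\mathcal{E}\to\mathcal{E}/\mathcal{E}_{i-1}$ is pseudo-effective with degree zero for all Kähler classes.

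\textbf{Step 2 (vanishing of $c_2$).} Next I would prove $\widehat{c}_2(\mathcal{E})\cdot[\omega]^{n-2}=0$, where $\widehat{c}_2$ is the orbifold second Chern class, well defined in codimension two on a klt space. The Bogomolov--Gieseker inequality for semistable reflexive sheaves on klt Kähler spaces, together with $c_1=0$, gives $\widehat{c}_2\cdot[\omega]^{n-2}\geq 0$. For the reverse inequality I would use pseudo-effectivity of all symmetric powers. Pass to a resolution, or to local orbifold charts over the codimension-two locus. There the metrics $h_m$ on $\Sym^m\mathcal{E}$, regularized à la Demailly, have curvature $\geq -\omega$ outside small analytic sets. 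Then $\mathcal{O}_{\mathbb{P}(\mathcal{E})}(1)$ is ``modified nef'' over a big open set. The inequality $\bigl(c_1^2-c_2\bigr)(\mathcal{E})\cdot[\omega]^{n-2}\geq 0$ follows from the Segre class computation $s_2=c_1^2-c_2$ applied to the nef-in-codimension-one tautological class, integrated against $\omega^{n-2}$. Since $c_1=0$, this yields $c_2\cdot[\omega]^{n-2}\leq0$. This step is the main obstacle. In the projective case one restricts to complete intersection surfaces, as in \cite{HP19}, but in the Kähler case one must control the error terms of the regularization and the behaviour over $X_{\sing}$. I would first try to work on a strong resolution and push the inequality down, using that orbifold Chern numbers agree with those of the resolution up to exceptional contributions that vanish against $[\omega]^{n-2}$.

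\textbf{Step 3 (flatness).} Given Steps 1 and 2, each stable graded piece $\mathcal{G}_i$ has vanishing $c_1$ and $\widehat{c}_2\cdot[\omega]^{n-2}$. By the Kähler nonabelian Hodge correspondence on klt spaces, $\mathcal{G}_i|_{X_{\reg}}$ is Hermitian flat. The hypothesis that $X$ is maximally quasi-étale gives $\pi_1(X_{\reg})\to\pi_1(X)$ an isomorphism on profinite completions. This lets the flat structure extend, so each $\mathcal{G}_i$ is a locally free flat bundle on $X$. An induction on the filtration then shows that the extensions $0\to\mathcal{E}_{i-1}\to\mathcal{E}_i\to\mathcal{G}_i\to0$, which are reflexive and split-free in codimension two, are locally free. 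Finally, an extension of Hermitian flat bundles is numerically flat.
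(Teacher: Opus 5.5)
The paper does not prove this statement. It is imported from \cite[Theorem 1.1]{CDM}, so your proposal has to stand on its own. Your architecture is reasonable: semistability, then vanishing of $\widehat{c}_2\cdot[\omega]^{n-2}$, then a flatness criterion on a maximally quasi-\'etale space. Step 1 is essentially fine, with one correction: Lemma \ref{lem-vanish} needs $\mathcal{L}^{\vee}$ to be pseudo-effective as well, which you do not have. What you actually use is that a pseudo-effective class of degree zero against a K\"ahler class vanishes.

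The genuine gap is the inequality $c_2\cdot[\omega]^{n-2}\le 0$ in Step 2. This is the only place where pseudo-effectivity has to do real work, since semistability with $c_1=0$ is compatible with $c_2>0$. You invoke the principle that a tautological class which is nef in codimension one (or nef off small analytic sets after Demailly regularization) has nonnegative Segre class $s_2=c_1^2-c_2$ against $[\omega]^{n-2}$. That principle is false. Take a surface $S$ containing a $(-1)$-curve $C$ and set $E=\mathcal{O}_S\oplus\mathcal{O}_S(C)$.
\begin{itemize}
\item Every $\Sym^m E=\bigoplus_{k\le m}\mathcal{O}_S(kC)$ carries the semipositively curved singular metric defined by the sections $s_C^k$, so $E$ is pseudo-effective.
\item The base locus of $\mathcal{O}_{\mathbb{P}(E)}(1)$ is the curve over $C$ given by the quotient $E|_C\to\mathcal{O}_C(C)$. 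It has codimension two in the threefold $\mathbb{P}(E)$, so $\xi$ is nef in codimension one.
\item Nevertheless $s_2(E)=C^2=-1$.
\end{itemize}
Positivity of $\xi^{r+1}$ requires genuine nefness. Products of regularized currents pick up uncontrolled contributions along their singular sets, and pseudo-effectivity allows those sets to lie over divisors of $X$.

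In your argument $c_1=0$ enters only after this inequality, to turn $s_2\ge 0$ into $c_2\le 0$. The missing idea is that $c_1=0$ must be used before, to show the non-nef locus is empty. Concretely, you need to prove that pseudo-effectivity together with $c_1=0$ forces $\mathcal{E}$ to be nef, hence numerically flat. This should be done on orbifold charts over the big open set where $X$ has quotient singularities. For $X$ smooth and $\mathcal{E}$ locally free, this is the manifold case of the statement itself. A suitable tool is the normalization-and-limit argument in the proof of Claim \ref{claim-ref}: quasi-psh functions with $dd^c\ge-\frac1m\omega_X$, normalized by $\sup=0$, converge to a constant, so their Lelong numbers become negligible. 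Here $c_1=0$ is what should let you control the singularities of the induced metrics on $\mathcal{O}_{\mathbb{P}(\mathcal{E})}(m)$ uniformly in $m$. Once this is done, vanishing of $c_2$ is a consequence rather than an input, and the Bogomolov--Gieseker inequality is not needed. What remains is extension across the singular locus, which is where maximal quasi-\'etaleness enters.

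Two smaller points. First, your fallback through a resolution fails as stated. Over codimension-two singular strata, the difference between orbifold and resolution Chern numbers does not vanish against $[\omega]^{n-2}$; already on a surface with an $A_1$ point, $\widehat{c}_2(T_X)$ is not an integer. Second, in Step 3 the Jordan--H\"older quotients need not be reflexive along codimension-two sets, so local freeness of the successive extensions is not automatic. It is cleaner to run the monodromy argument for $\mathcal{E}|_{X_{\reg}}$ itself.
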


We close this section with Theorem \ref{thm-weak} below, 
which is a generalization of \cite[Theorem 4.6]{DGP24} to K\"ahler varieties;
its detailed proof will be given in the forthcoming paper of the first author \cite[Section 6]{MWWZ}.

\begin{thm}[{cf.~\cite[Theorem 4.6]{DGP24}}]\label{thm-weak}
Let $X$ be a strongly $\mathbb{Q}$-factorial klt compact K\"ahler variety, and
let $\mathcal{F}$ be a weakly regular foliation on $X$ with algebraic leaves.
Then $\mathcal{F}$ is induced by a surjective equidimensional morphism
$p \colon X \to Y$ onto a normal compact K\"ahler variety $Y$.
\end{thm}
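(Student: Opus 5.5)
The plan is to follow the strategy of \cite[Theorem 4.6]{DGP24}, replacing algebraic tools (Chow varieties, projective MMP-type statements) by their analytic counterparts (Barlet cycle spaces and Fujiki's results on compactness in the Kähler setting). Let $r$ be the rank of $\mathcal{F}$. Since the leaves are algebraic, the closure of a general leaf is a compact $r$-dimensional subvariety of $X$. By Fujiki's theorem, the irreducible components of the Barlet cycle space of a compact Kähler variety are compact. We therefore take the component $\mathcal{C}$ containing the closures of general leaves, and let $Y$ be its normalization. Let $\Gamma \subset Y \times X$ be the normalized universal family, with projections $q\colon \Gamma\to Y$ and $\nu\colon\Gamma\to X$. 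Then $q$ is equidimensional with $r$-dimensional fibers, and $\nu$ is bimeromorphic, because through a general point passes exactly one leaf. Moreover, $Y$ is a compact Kähler variety: it is a normal image of a compact Kähler space, so Varouchas' theorem applies, possibly after passing to a bimeromorphic model; I would check this carefully but expect it to be standard. The foliation $\mathcal{F}$ coincides with the foliation induced by the meromorphic map $q\circ\nu^{-1}\colon X\dashrightarrow Y$. It therefore suffices to show that $\nu$ is an isomorphism, and then to set $p = q\circ\nu^{-1}$.

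The first reduction uses strong $\mathbb{Q}$-factoriality. Since $X$ is strongly $\mathbb{Q}$-factorial and $\nu$ is a bimeromorphic morphism from a normal space, the exceptional locus $\operatorname{Exc}(\nu)$ is of pure codimension one in $\Gamma$. The argument is the usual one: pull back a relatively ample $\mathbb{Q}$-Cartier divisor locally over $X$, using Lemma \ref{lem-Weil} to realize rank one reflexive sheaves as Weil divisors. So if $\nu$ is not an isomorphism, then, since $\nu$ is finite outside $\operatorname{Exc}(\nu)$ and bimeromorphic onto a normal space, there is a prime divisor $E\subset\Gamma$ contracted by $\nu$. We then distinguish two cases. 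If $E$ dominates $Y$, then a general fiber of $q$ (a general leaf closure) would meet $E$ in a divisor that $\nu$ contracts. This is impossible, since $\nu$ restricted to a general fiber is an embedding. Hence $E$ is $q$-vertical, and by equidimensionality of $q$ its image $q(E)$ is a divisor $B\subset Y$, with $E$ a component of $q^{*}B$.

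The main obstacle is to rule out such a vertical exceptional divisor, and this is exactly where weak regularity enters. I would first try the canonical bundle comparison of \cite{DGP24}. For the foliation induced by the equidimensional map $q$ one has $K_{\mathcal{F}_\Gamma} = K_{\Gamma/Y} - R(q)$, where $R(q)=\sum (q^*B_i - (q^*B_i)_{\mathrm{red}})$ is the ramification divisor. Since $\nu$ is an isomorphism in codimension one on $X$ (by the case above no divisor of $X$ is lost), we get $K_{\mathcal{F}} = \nu_* K_{\mathcal{F}_\Gamma}$. Weak regularity means that the Pfaff field $\Omega^{[r]}_X \to \mathcal{O}_X(K_{\mathcal{F}})$ is surjective. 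I will use this to show that $\nu^*K_{\mathcal{F}}$, which makes sense as a $\mathbb{Q}$-Cartier divisor by strong $\mathbb{Q}$-factoriality, agrees with $K_{\mathcal{F}_\Gamma}$ along $E$. Concretely, the $r$-forms on $X$ pull back to sections of $\Omega^{[r]}_\Gamma$, and their images generate $\mathcal{O}_\Gamma(\nu^*K_{\mathcal{F}})$. On the other hand, along a vertical divisor contracted by $\nu$ one computes a strictly positive discrepancy $K_{\mathcal{F}_\Gamma} - \nu^*K_{\mathcal{F}} = aE+\cdots$ with $a>0$, as in \cite[Lemma 4.4]{DGP24}, by restricting to a general curve in $\Gamma$ transversal to $E$ and using that $E$ is covered by $\nu$-contracted fibers of $q$. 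These two facts contradict each other, so $\nu$ is an isomorphism.

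Finally, $p\coloneqq q\circ\nu^{-1}\colon X\to Y$ is a surjective equidimensional morphism whose general fibers are the leaf closures, so $\mathcal{F}$ is induced by $p$. To conclude, I would verify that $\mathcal{F}$ equals the saturated relative tangent sheaf $T_{X/Y}$; this holds because both are saturated subsheaves of $T_X$ that agree on a dense open set.
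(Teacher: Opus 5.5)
The paper does not prove Theorem~\ref{thm-weak}; it defers the K\"ahler version of \cite[Theorem 4.6]{DGP24} to \cite[Section 6]{MWWZ}. Your proposal therefore has to stand on its own. It has a genuine gap at the step that excludes $\nu$-exceptional divisors, which is the whole content of the statement.

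\textbf{The horizontal case.} The point you overlook is that $\nu$ is finite on \emph{every} fiber of $q$, since a fiber maps onto the support of its cycle. So the positive-dimensional fibers of $\nu|_E$ are always transverse to the fibers of $q$. It follows that $E\cap q^{-1}(y)$ is never contracted. Yet $\nu(E)$ can still have codimension $\geq 2$, because the images $\nu(E\cap q^{-1}(y))$ overlap as $y$ varies.

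The pencil of lines through a point $p\in\mathbb{P}^2$ shows this. Here $\Gamma=\mathrm{Bl}_p\mathbb{P}^2\to\mathbb{P}^1$, the exceptional curve is a section of $q$, and $\nu$ is injective on every fiber. Your horizontal argument never uses weak regularity, so it would exclude this example. But this is exactly the kind of foliation that weak regularity must rule out: its Pfaff field vanishes at $p$. Note also that $\nu|_{q^{-1}(y)}$ is in general only the normalization of the leaf closure, not an embedding.

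\textbf{The vertical case.} The phrase ``$E$ is covered by $\nu$-contracted fibers of $q$'' is false for the same reason, and the sign $a>0$ is wrong.
\begin{itemize}
\item At a general point of $E$, $\nu$ restricted to the reduced fiber of $q$ is a finite map from a smooth germ, hence an immersion. So some pulled-back (even K\"ahler) $r$-form restricts to a local generator of $\mathcal{O}_\Gamma(K_{\mathcal{F}_\Gamma})$.
\item Comparing with the Pfaff field, the coefficient of $E$ in $K_{\mathcal{F}_\Gamma}-\nu^*K_{\mathcal{F}}$ equals minus the vanishing order along $E$ of the pulled-back image of $\Omega^{[r]}_X\to\mathcal{O}_X(K_{\mathcal{F}})$. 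Hence it is $\leq 0$. This is the effectivity of the Araujo--Druel divisor $\Delta$ in $K_{\mathcal{F}_\Gamma}+\Delta\sim_{\mathbb{Q}}\nu^*K_{\mathcal{F}}$.
\item Your weak-regularity computation gives only the reverse inequality $\nu^*K_{\mathcal{F}}\leq K_{\mathcal{F}_\Gamma}$, not agreement. So even granting $a>0$ there would be no contradiction.
\item Combining the two inequalities gives $a=0$, which contradicts nothing.
\end{itemize}

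\textbf{What is missing.} You need a geometric argument that a $\nu$-exceptional divisor is incompatible with weak regularity. The relevant fact is that a positive-dimensional family of members passes through each point of $\nu(E)$. On $X_{\mathrm{reg}}$ weak regularity amounts to regularity. There the horizontal case follows from uniqueness of the leaf through a point. For vertical $E$, or when $\nu(E)\subset X_{\mathrm{sing}}$, a real argument is needed, and the hypotheses on the singularities of $X$ must enter. The proposal does not supply one.

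\textbf{Two smaller points.}
\begin{itemize}
\item $\mathcal{C}$ must be the closure of the locus of leaf closures, not the irreducible component of the Barlet space containing them. For the pencil above, that component is all of $\mathbb{P}^{2\vee}$, and $\Gamma\to X$ is then not bimeromorphic.
\item Your pure-codimension-one argument for $\operatorname{Exc}(\nu)$ uses a $\nu$-ample divisor. Its existence is not automatic in the analytic setting.
\end{itemize}
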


\section{Proof of Main Results}

In this section, we prove the main results.
Let us begin with the setup below followed by a technical proposition that will be heavily used in the proofs.

\begin{setup}\label{set-mrc}
Let $X$ be a normal compact klt K\"ahler variety with pseudo-effective tangent sheaf.
Let $\phi \colon X \dashrightarrow Y$ be an almost holomorphic map onto a normal compact analytic variety $Y$,
and let $\pi \colon \widetilde X \to X$ be a resolution of the indeterminacy locus of $\phi \colon X \dashrightarrow Y$
and of the singularities of $X$, equipped with the following diagram:
\begin{equation*}
\xymatrix@C=40pt@R=30pt{
\widetilde X \ar[rr]^{\pi} \ar[rd]_{\varphi}  &  & X \ar@{-->}[ld]^{\phi}\\
& Y. &
}
\end{equation*}
Suppose that $\codim \pi(\varphi^{-1}(Y_{\sing})) \geq 2$ holds and that
the canonical divisor $K_{Y}$ is $\mathbb{Q}$-Cartier and pseudo-effective. 
\end{setup}

\begin{rem}\label{rem-wreg}
The technical assumptions in Setup \ref{set-mrc} are naturally satisfied in our applications.

\begin{enumerate}
\item[(1)] The assumption 
$
\codim \pi(\varphi^{-1}(Y_{\sing})) \geq 2
$ is satisfied if either \(Y\) is smooth, or $\phi \colon X \dashrightarrow Y$ is an everywhere-defined equidimensional fibration. 
\item[(2)] The assumption that the canonical divisor \(K_Y\) is pseudo-effective is satisfied if \(Y\) is non-uniruled.
Indeed, if we take a resolution of singularities \(\sigma\colon \widetilde{Y}\to Y\), then by the K\"ahler analogue of \cite{BDPP13}, which is recent breakthrough proved in \cite{Ou25}
$($cf.~\cite{CH20,CP}$)$, the canonical divisor  \(K_{\widetilde{Y}}\) is pseudo-effective; hence, \(K_Y=\sigma_*K_{\widetilde{Y}}\) is also pseudo-effective.
\item[(3)]  By (1) and (2), the assumptions in Setup \ref{set-mrc} are satisfied, if 
$\phi \colon X \dashrightarrow Y$ is either  an MRC fibration
onto a compact K\"ahler manifold $Y$ (see \cite{GHS03}), or an everywhere-defined equidimensional MRC fibration onto a \(\mathbb{Q}\)-factorial normal variety. 
\item[(4)] We will see in the proof of Theorem \ref{thm-str} that the assumptions in Setup \ref{set-mrc} are also satisfied if \(\phi\) is the MRC fibration to the minimal model of a compact K\"ahler manifold.
\end{enumerate}
\end{rem}

\begin{prop}\label{prop-flat}
Under Setup \ref{set-mrc}, the following statements hold:
\begin{itemize}
\item[$(1)$] The reflexive sheaf
$$
\mathcal{Q} \coloneqq (\pi_*{\varphi}^{*} \Omega_Y)^{\vee}
$$
is pseudo-effective with vanishing first Chern class.
In particular, if $X$ is maximally quasi-\'etale,
then $\mathcal{Q}$ is a numerically flat locally free sheaf on $X$.

\item[$(2)$] The pull-back $\varphi^{*}K_{Y}$ is numerically equivalent to an effective $\pi$-exceptional divisor.
In particular, the numerical dimension of $K_{Y}$ is equal to zero.
\end{itemize}
\end{prop}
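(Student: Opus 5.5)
We will work on the big open set $X_0\subseteq X$ where everything is well behaved. Let $Y_0 := Y_{\reg}$ and let $X_0 \subseteq X_{\reg}$ be the complement of $\pi(\mathrm{Exc}(\pi)) \cup \pi(\varphi^{-1}(Y_{\sing}))$. By the assumption $\codim \pi(\varphi^{-1}(Y_{\sing}))\ge 2$, and since $X$ is normal, $X\setminus X_0$ has codimension $\ge 2$. On $X_0$ the map $\phi$ is holomorphic into $Y_0$, and $\pi$ is an isomorphism over $X_0$. The plan has four steps:
\begin{itemize}
\item[(a)] show that $\mathcal{Q}|_{X_0}$ is a quotient of $T_X|_{X_0}$ generically;
\item[(b)] deduce that $\mathcal{Q}$, hence $\det\mathcal{Q}$, is pseudo-effective;
\item[(c)] use the pseudo-effectivity of $K_Y$ to bound $\det\mathcal{Q}$ from the other side;
\item[(d)] conclude with Lemma \ref{lem-vanish} and Theorem \ref{thm-flat}.
\end{itemize}

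\emph{Steps (a) and (b).} On $X_0$ we have the pull-back of forms $\phi^*\Omega_{Y}\to \Omega_{X}$. It is generically injective because $\phi$ is dominant, so it is injective as a map of torsion-free sheaves. Dualizing gives the differential $T_{X}|_{X_0}\to \mathcal{Q}|_{X_0}$, which is generically surjective, since $\phi$ is a submersion on a dense open set. Pseudo-effectivity passes to generically surjective images of torsion-free sheaves, for every $\Sym^m$, by pushing the metrics $h_m$ to quotient metrics, in the spirit of \cite{Mat23,IMZ23}. It is insensitive to codimension-$2$ sets. Hence $\mathcal{Q}$ is pseudo-effective, and so is $\det\mathcal{Q}$, as a generically surjective quotient of $\Sym^r\mathcal{Q}$ with $r=\dim Y$.

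\emph{Step (c).} Next, $\det \mathcal{Q}^{\vee} = (\det \pi_*\varphi^*\Omega_Y)^{\vee\vee}$, and on $X_0$ this agrees with $\phi^*K_{Y_0}$. So $(\det\mathcal{Q})^{\vee}\cong \mathcal{O}_X(\pi_*\varphi^*K_Y)$ as reflexive rank-one sheaves; here $\varphi^*K_Y$ makes sense because $K_Y$ is $\mathbb{Q}$-Cartier. Since $K_Y$ is pseudo-effective, $\varphi^*K_Y$ is pseudo-effective, and so is its push-forward. Hence $(\det \mathcal{Q})^\vee$ is a pseudo-effective rank-one sheaf. This is checked by taking a singular metric with curvature $\ge -\varepsilon\omega$ on $\varphi^*K_Y$ and restricting to $X_0$, where $\pi$ is an isomorphism.

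\emph{Step (d).} Lemma \ref{lem-vanish} now gives $c_1(\mathcal{Q})=0$. If $X$ is maximally quasi-\'etale, Theorem \ref{thm-flat} applies to the pseudo-effective reflexive sheaf $\mathcal{Q}$ and yields local freeness and numerical flatness. This proves (1).

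\emph{Part (2).} Write
\[
\varphi^*K_Y = \pi^*\pi_*\varphi^*K_Y - E,
\]
where $E$ is $\pi$-exceptional. By (1), $\pi_*\varphi^*K_Y\equiv 0$, so $\varphi^*K_Y\equiv -E$. It remains to show that $-E$ is effective.

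This is the main obstacle. Since $\varphi^*K_Y$ is pseudo-effective, its divisorial Zariski decomposition (Boucksom) gives $\varphi^*K_Y \equiv P+N$, with $N$ effective and $P$ modified nef. Then $\pi_*P\equiv -\pi_*N$ with $\pi_*N$ effective. I would argue that a modified nef class whose push-forward is numerically anti-effective must itself vanish, for instance via the negativity lemma applied after pairing with $\pi^*\omega^{n-1}$ or with movable classes. It would then follow that $\pi_*N\equiv 0$, so $N$ is $\pi$-exceptional and $\varphi^*K_Y\equiv N$. The numerical dimension claim follows because $\nu(\varphi^*K_Y)=\nu(N)=0$ for an exceptional effective divisor, and $\nu(K_Y)=\nu(\varphi^*K_Y)$.
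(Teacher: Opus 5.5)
Your part (1) follows the paper's proof step for step: a generically surjective map $T_X\to\mathcal{Q}$ built on a big open set, pseudo-effectivity of $\mathcal{Q}$ and $\det\mathcal{Q}$, the identification $(\det\mathcal{Q})^{\vee}\cong\mathcal{O}_X(\pi_*\varphi^*K_Y)$, then Lemma~\ref{lem-vanish} and Theorem~\ref{thm-flat}. Two of your justifications there need repair, although the paper also asserts both statements without proof.

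First, $\det\mathcal{Q}$ is not a quotient of $\Sym^r\mathcal{Q}$: the composite $\Sym^r\mathcal{Q}\to\mathcal{Q}^{\otimes r}\to\Lambda^r\mathcal{Q}$ is zero. Argue instead with $\det(\Sym^m\mathcal{Q})\cong(\det\mathcal{Q})^{\otimes mN/r}$, where $N=\rank\Sym^m\mathcal{Q}$. The determinant metric induced by $h_m$ has curvature $\geq -N\omega_X$, which gives curvature $\geq -\tfrac{r}{m}\omega_X$ on $\det\mathcal{Q}$.

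Second, in step (c) a lower bound $-\varepsilon\omega_{\widetilde X}$ does not restrict to a bound $-C\varepsilon\omega_X$ uniformly on $X_0$. Near $\pi(\operatorname{Exc}\pi)$ the form $\omega_{\widetilde X}$ is not dominated by any multiple of $\omega_X$. Instead, push the current forward and absorb $\pi_*\omega_{\widetilde X}-C\omega_X$ into $dd^c$ of a quasi-psh function that is bounded above.

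The genuine gap is the last step of (2), which you flag yourself. Pairing with $\pi^*\omega_X^{n-1}$ does give $N\cdot\pi^*\omega_X^{n-1}=0$, so $N$ is $\pi$-exceptional and $\pi_*P\equiv 0$. But $\pi^*\omega_X^{n-1}$ annihilates every $\pi$-exceptional class, so it cannot show $P\equiv 0$. At that point $P$ is a modified nef class numerically equal to the exceptional divisor $-E-N$. The claim that such a class vanishes is exactly the rigidity of $\pi$-exceptional divisors: they form an exceptional family in Boucksom's sense, which is a negativity-lemma statement. That input is what lies behind the paper's one-line appeal to the divisorial Zariski decomposition, and your sketch does not supply it.

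A direct way to close the gap, with no Zariski decomposition at all, is the following.
\begin{enumerate}
\item Take a closed positive current $T$ in the class of $\varphi^*K_Y\equiv -E$. Then $\int_{\widetilde X}T\wedge\pi^*\omega_X^{n-1}=-E\cdot\pi^*\omega_X^{n-1}=0$.
\item Let $U\subseteq X_{\reg}$ be the open set over which $\pi$ is an isomorphism, so $\pi^*\omega_X$ is K\"ahler on $\pi^{-1}(U)$. Vanishing of the mass then forces $T$ to vanish on $\pi^{-1}(U)$.
\item Since $\codim(X\setminus U)\geq 2$, every divisorial component of $\pi^{-1}(X\setminus U)$ is $\pi$-exceptional. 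The support theorem therefore gives $T=\sum_j c_j[E_j]$ with $c_j\geq 0$ and each $E_j$ a $\pi$-exceptional prime divisor.
\item The classes of $\pi$-exceptional prime divisors are linearly independent (negativity lemma). Hence $-E=\sum_j c_jE_j\geq 0$.
\end{enumerate}
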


\begin{proof}
The strategy of the proof is the same as in \cite{IMZ23}; the only difference is that we do not assume that $X$ is strongly $\mathbb{Q}$-factorial. Since the cotangent sheaf $\Omega_{Y}$ is locally free on $Y_{\reg}$, we have a natural injective morphism of sheaves
$
{\varphi}^{*} \Omega_Y \to \Omega_{\widetilde X}
$
on $\widetilde X \setminus \varphi^{-1}(Y_{\sing})$. 
Pushing forward by $\pi$, we obtain an injective morphism of sheaves
$
\pi_*{\varphi}^{*} \Omega_Y \to \pi_*\Omega_{\widetilde X}
$
on $X \setminus \pi(\varphi^{-1}(Y_{\sing}))$. 
By the assumption 
$
\codim \pi(\varphi^{-1}(Y_{\sing})) \geq 2, 
$
using $\Omega_X^{[1]} \cong \pi_{*}\Omega_{\widetilde X}$ and then  dualizing, 
we obtain an exact sequence
\begin{align}\label{eq-1}
0 \longrightarrow \mathcal{S}\coloneqq \Ker r
\longrightarrow T_{X}
\xrightarrow{\quad r \quad}
\mathcal{Q}\coloneqq (\pi_{*}{\varphi}^{*} \Omega_Y)^{\vee}
\qquad\text{on } X.
\end{align}
The morphism $T_{X} \to \mathcal{Q}$ is generically surjective by construction. Together with the pseudo-effectivity of $T_{X}$, this implies that  $\mathcal{Q}$ is also pseudo-effective (see, e.g., \cite[Proposition 2.4]{CDM}). 
It follows that $\det \mathcal{Q}$ is pseudo-effective as well. By reflexivity, we have
\[
\pi_{[*]}\mathcal{O}_{\widetilde{X}}(\varphi^{*}K_{Y})=\det \mathcal{Q}^{\vee}.
\]
By the assumption that the left-hand side is pseudo-effective,  
Lemma \ref{lem-vanish} then shows that $c_{1}(\mathcal{Q})$ vanishes. 
The remaining assertion of $(1)$ follows from Theorem \ref{thm-flat}. 

By construction, since $c_{1}(\mathcal{Q})=0$, 
we see that $M\coloneqq \pi^{[*]}\mathcal{Q}$ is numerically equivalent to a $\pi$-exceptional divisor. 
Since the difference between $\varphi^{*}K_{Y}$ and $M=\pi^{[*]}c_1(\mathcal{Q})$ is a $\pi$-exceptional divisor, 
the $\mathbb{Q}$-line bundle $\varphi^{*}K_{Y}$ is numerically equivalent to a $\pi$-exceptional divisor $F$. 
Since $\varphi^{*}K_{Y}$ is pseudo-effective, by  the divisorial Zariski decomposition, the divisor $F$ is effective (cf. the proof of \cite[Proposition 3.2]{MZ}). 
\end{proof}

\begin{proof}[Proof of Theorem \ref{thm-quot}]
The existence of maximally quasi-\'etale covers for klt K\"ahler varieties is known by  \cite[Theorem 2.5]{FO25} (cf.~\cite[Theorem 1.5]{GKP16} and \cite[Corollary  3.6]{Gue25}). 
Therefore, in view of Lemma \ref{lem-quot}, we may assume that $X$ is maximally quasi-\'etale.

Take an MRC fibration $\phi \colon X \dashrightarrow Y$ onto a compact K\"ahler manifold \(Y\).
By Remark \ref{rem-wreg} (3), the assumptions in Setup \ref{set-mrc} are satisfied, and hence  Proposition \ref{prop-flat} applies. 
For the reflexive sheaf $\mathcal{S}$ defined in  \eqref{eq-1},
we consider the torsion-free sheaf $\mathcal{Q}_{0}\coloneqq T_X/ \mathcal{S}$ with
\begin{align}\label{eq-2}
0 \longrightarrow \mathcal{S}\coloneqq \Ker r \longrightarrow
T_X \xrightarrow{\quad r \quad}\mathcal{Q}_{0} \longrightarrow 0
\qquad\text{on } X.
\end{align}
By \cite[Lemma 2.9]{IMZ23},
this sequence is an exact sequence of vector bundles
on the smooth locus $X_{\reg}$, noting that \(\mathcal{Q}\) is numerically flat by Proposition \ref{prop-flat}.
Thus, the reflexive hull $\mathcal{Q}_{0}^{\vee\vee}$ coincides with the numerically flat sheaf $\mathcal{Q}$.
The following claim directly follows from \cite[Proposition 3.1]{CDM}, 
but we give a sketch of the proof.

\begin{claim}\label{claim-ref}
$\mathcal{Q}_{0}=\mathcal{Q}$ on the whole variety $X$.
\end{claim}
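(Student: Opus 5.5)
We want to show that the natural inclusion $\mathcal{Q}_{0}\hookrightarrow \mathcal{Q}_{0}^{\vee\vee}=\mathcal{Q}$ is an isomorphism on all of $X$, i.e.\ that the torsion-free quotient $\mathcal{Q}_0$ of $T_X$ is already reflexive, indeed locally free. The plan is to exploit the fact that $\mathcal{Q}$ is a numerically flat vector bundle (Proposition \ref{prop-flat}), together with the local structure of quotient singularities.

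\textbf{Step 1: reduce to a local problem on a smooth cover.} The question is local on $X$. On $X_{\reg}$ the map $r$ is already a surjection of vector bundles by \cite[Lemma 2.9]{IMZ23}, so only points of $X_{\sing}$ matter. Near such a point, Lemma \ref{lem:q-f} provides a finite quasi-\'etale Galois cover $\tau\colon V\to U\cong V/G$ with $V$ smooth, which is \'etale in codimension one.

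\textbf{Step 2: lift the sequence and obtain surjectivity on $V$.} On $V$ we have $\tau^{[*]}T_U=T_V$, since $\tau$ is quasi-\'etale. Similarly, $\tau^{*}\mathcal{Q}$ is a locally free sheaf, pulled back from a numerically flat bundle. The reflexive pull-back of $r$ gives a morphism
\[
T_V\to \tau^*\mathcal{Q},
\]
which is surjective outside a codimension-two subset, because $\tau$ is \'etale over $X_{\reg}$ away from codimension two. Its kernel is the saturated subsheaf $\tau^{[*]}\mathcal{S}$.

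The key input is a pointwise surjectivity statement, which I would prove as in \cite[Proposition 3.1]{CDM}. A generically surjective morphism from a vector bundle onto a numerically flat bundle, coming from a pseudo-effective sheaf, is surjective everywhere. The argument is that the image sheaf, saturated in $\tau^*\mathcal{Q}$, has a determinant that is pseudo-effective and differs from $\det \tau^*\mathcal{Q}$ by an effective divisor. Since $c_1(\mathcal{Q})=0$, this divisor must vanish. On the smooth $V$ the degeneracy locus of the map has codimension one unless it is empty, so the map is a surjection of bundles on all of $V$.

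\textbf{Step 3: descend to $X$.} The map $T_V\to \tau^*\mathcal{Q}$ is $G$-equivariant and surjective. Taking $G$-invariants of $\tau_*$, an exact functor in characteristic zero, shows that $(\tau_*T_V)^G=T_U$ surjects onto $(\tau_*\tau^*\mathcal{Q})^G=\mathcal{Q}$. Hence $r\colon T_X\to\mathcal{Q}$ is surjective, so its image $\mathcal{Q}_0$ equals $\mathcal{Q}$.

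\textbf{Main obstacle.} The hardest step is the everywhere-surjectivity on $V$ in Step 2. A naive codimension count only excludes a divisorial degeneracy locus, and does not rule out a degeneracy locus of codimension at least two. If the determinant argument fails to handle this, I would fall back on a different route: dualize, so that $\tau^*\mathcal{Q}^\vee\to\Omega_V$ is a morphism from a flat bundle into a bundle. I would then argue that a numerically flat subsheaf of $\Omega_V$ whose quotient is torsion-free in codimension one is a subbundle, via the flatness of its Chern connection.
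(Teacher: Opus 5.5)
Your Steps 1 and 3 are sound and match the frame of the paper's argument: a local smooth quasi-\'etale Galois chart, then descent through the exact functor $(\tau_*(\cdot))^G$ as in \cite[Lemma A.3]{GKKP11}. However, Step 2 carries the whole content of the claim, and it has a genuine gap.

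\textbf{Why Step 2 fails.} Your determinant argument yields nothing. The map $T_V\to\tau^*\mathcal{Q}$ is already surjective off a codimension-two set, so its image has reflexive hull $\tau^*\mathcal{Q}$ and the same determinant; no effective divisor can appear. Your claim that the degeneracy locus is either empty or a divisor is false whenever $\dim X>\dim Y$, which is the only non-trivial case. With $s=\operatorname{rank}\mathcal{Q}$, the non-surjectivity locus is the common zero set of the $s\times s$ minors. That is the zero set of a section of $(\Lambda^{s}T_V)^{\vee}\otimes\det\tau^*\mathcal{Q}$, and it may have any codimension. Ruling out exactly this codimension $\geq 2$ degeneracy is what the claim requires.

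Your fallback does not do it either. On the small non-compact chart $V$ every bundle is flat, so a Chern connection carries no information. Even on a compact manifold, a holomorphic $1$-form $\omega$ with isolated zeros gives a trivial subsheaf $\mathcal{O}\cdot\omega\subset\Omega^1$ with torsion-free quotient that is not a subbundle. So the pseudo-effectivity of $T_X$ must be used pointwise, and it must be transported from the compact space $X$ to $V$. The paper remarks that the claim on $X$ follows from \cite[Proposition 3.1]{CDM}. You instead invoke such a global statement on the chart $V$, where it has no meaning, and with a proof sketch that does not work.

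\textbf{The missing idea} is the paper's Lelong number argument.
Let $\mathcal{F}=\Lambda^{[s]}T_X[\otimes](\det\mathcal{Q}_0)^{\vee}$ and let $\sigma\in H^0(X,\mathcal{F}^{\vee})$ be the section induced by $\Lambda^s r$.
Pseudo-effectivity of $T_X$ together with $c_1(\mathcal{Q})=0$ gives singular metrics $h_m$ on $\mathrm{Sym}^m\mathcal{F}$ with $\sqrt{-1}\Theta_{h_m}\geq-\omega_X\otimes\mathrm{id}$.
Hence $u_m=\frac1m\log|\sigma^m|_{h_m^{\vee}}$ extends across codimension two (by normality, \cite{GR56}) to a quasi-psh function on $X$ with $dd^c u_m\geq-\frac1m\omega_X$.
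After normalizing $\sup_X u_m=0$, a subsequence converges in $L^1$ to a psh function, which is constant because $X$ is compact.

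On $V$, the pulled-back section $\sigma_{\mathrm{orb}}$ is exactly the section of minors of $T_V\to\tau^*\mathcal{Q}$, and $\tau^*u_m$ agrees with $\frac1m\log|\sigma_{\mathrm{orb}}^m|_{\tau^*h_m^{\vee}}$.
Suppose $\sigma_{\mathrm{orb}}(p)=0$. Then every coefficient of $\sigma_{\mathrm{orb}}^m$ in a local frame vanishes to order $\geq m$ at $p$. Since $h_m^{\vee}$ is bounded above on frames, $\tau^*u_m$ has Lelong number $\geq1$ at $p$ for every $m$. This contradicts upper semicontinuity of Lelong numbers under $L^1$ convergence to a constant.

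Hence $\sigma_{\mathrm{orb}}$ vanishes nowhere, i.e.\ $T_V\to\tau^*\mathcal{Q}$ is surjective at every point; the paper phrases this via \cite[Lemma 1.20]{DPS94}. Your Step 3 then concludes. Unlike a comparison of determinants, this argument detects zeros of a section in every codimension.
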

\begin{proof}[Sketch of the proof of Claim \ref{claim-ref}]
To this end, it is enough to prove that $\mathcal{Q}_{0}$ is a reflexive sheaf on $X$.
Let $s\coloneqq \rank \mathcal{Q}_{0}$ and set
\[
\mathcal{F}\coloneqq \Lambda^{[s]}T_X[\otimes](\det \mathcal{Q}_{0})^{\vee}.
\]
Let $\tau \in H^0(X, \mathcal{F}^{\vee})$ be the natural section obtained from the inclusion
$
0 \to \mathcal{O}_{X} \to \mathcal{F}^{\vee}
$
induced by the exterior power of the morphism in \eqref{eq-2}.
Since $T_X$ is pseudo-effective and $c_{1}(\mathcal{Q})=0$, by \cite[Propositions 2.4 and 2.5]{CDM}, 
there exist singular Hermitian metrics $\{h_{m}\}$ on $\Sym^{m}\mathcal{F}$
such that
$$
\sqrt{-1}\Theta_{h_{m}}(\Sym^{m}\mathcal{F})
\geq - \omega_{X} \otimes \id \quad\text{on } X,
$$
where $\omega_{X}$ is a fixed K\"ahler form on $X$.
Let $X_0$ denote the maximal open subset on which the sheaves in \eqref{eq-2} are locally free and the sequence is an exact sequence of vector bundles.
The function
$
\log |\tau^{m}|_{h^{\vee}_{m}}
$
satisfies
$$
\sqrt{-1}\ddbar \Big( \frac{1}{m}\log |\tau^{m}|_{h^{\vee}_{m}} \Big) \geq - \frac{1}{m} \omega_{X}
$$
on $X_{0}$,
and hence extends to a quasi-psh function on $X$ with the same curvature inequality,
since $\codim (X\setminus X_{0}) \geq 2$ and $X$ is normal 
by \cite[Satz 4]{GR56}.
Set
\[
u_m\coloneqq\frac1m\log |\tau^m|_{h_m^\vee}.
\]
Since multiplying \(h_m\) by a positive constant does not change its
curvature, we normalize \(h_m\) so that
\(\sup_X u_m=0\). 
Then
\(dd^c u_m\geq-\frac1m\omega_X\). 
According to the relative compactness theorem for normalized quasi-psh functions, after
passing to a subsequence, \(u_m\) converges in \(L^1(X)\) to a
quasi-psh function \(u\). 
Passing to the limit in the preceding
curvature inequality yields \(dd^cu\geq0\). Thus \(u\) is psh, and
hence is constant because \(X\) is compact and connected.

Since $X$ has quotient singularities, by Lemma \ref{lem:q-f}, for every point  $x\in X$, there is an open neighborhood $x\in U \subset X$
and a finite quasi-\'etale cover $\nu\colon V \to U$
such that $V$ is smooth.
Taking the pull-back of the sequence (\ref{eq-2}) and its double dual, we obtain the induced exact sequence
\begin{align}\label{eq-orb}
0 \to \nu^{[*]}\mathcal{S} \to \nu^{[*]}T_{X}
\to \mathcal{Q}_{\tf}\coloneqq \nu^{[*]}T_{X}/\nu^{[*]}\mathcal{S}  \to 0
\quad\text{on } V.
\end{align}
Note that $\mathcal{Q}_{\tf}$ (as a subsheaf of \(\nu^{[*]}\mathcal{Q}_0\)) is torsion-free.
Further, the reflexive pull-back $\nu^{[*]}T_{X}$ is isomorphic to $T_{V}$;
in particular, it is locally free since $V$ is smooth.
We will show that $\mathcal{Q}_{\tf}$ is locally free and
that the exact sequence (\ref{eq-orb}) is an exact sequence of vector bundles.
We claim that $\tau_{\orb}$ is nowhere vanishing over $U$,
where $\tau_{\orb}$ is the natural section of
$$
\mathcal{G}\coloneqq \Lambda^{[s]} {T_V}[\otimes] (\det \mathcal{Q}_{\tf})^{\vee}
$$
induced by the exact sequence (\ref{eq-orb}).
We consider singular Hermitian metrics $\{h_{\orb, m}\}$ on $\Sym^{m}\mathcal{G}$
defined by pulling back $\{h_{m}\}$, namely $h_{\orb, m}\coloneqq \nu^{*}h_{m}$.
If $\tau_{\orb}$ has a zero at $p$,
the Lelong number of
$(1/m)\log |\tau^{m}_{\orb}|_{h_{\orb, m}^{\vee}}$ at $p$
is uniformly bounded from below.

Indeed, the section $\tau_{\orb}^{  m}$ can be locally written as 
$
\tau_{\orb}^{m}=\sum_I \tau_I e_I, 
$
where $\{e_i\}_{i=1}^r$ is a local frame of $\mathcal{G}$, 
$I$ is a multi-index of degree $m$, and $e_I:=\otimes_{i\in I} e_i$. 
The holomorphic function $\tau_I$ has the multiplicity $\geq m$ at $p$
from $\tau_{\orb}=0$. 
It can be seen that $|\langle e_I, e_J \rangle_{h^\vee_m}|$ is bounded 
since $\log | u |_{h^\vee_m}$ is almost psh for any local section $u$
(for example see \cite[Lemma 2.2.4]{PT18}). 
Therefore, we see that 
$$
| \tau_{\orb}^{  m} |_{h_{\orb, m}^{\vee}} \leq C \sum_I |\tau_I |. 
$$
This implies that the Lelong number is greater than or equal to one.

This function can be written as the pull-back of $(1/m) \log |\tau^{m}|_{h^{\vee}_{m}}$,
which is a contradiction.
By \cite[Lemma 1.20]{DPS94}, both $\nu^{[*]}\mathcal{S}$ and $\mathcal{Q}_{\tf}$ are locally free on \(U\). 

After taking a Galois closure, we may assume that $\nu \colon V \to U$ is a Galois cover with a Galois group $G$ so that $\nu^{[*]}\mathcal{S}$ and $\mathcal{Q}_{\tf}$ are still locally free.  
Then, by \cite[Lemma~A.3]{GKKP11}, their $G$-invariant pushforwards $(\nu_{*}(\bullet))^{G}$ are also reflexive over $U$.
Since $\nu^{[*]}\mathcal{S}$, $T_{V}$, and $\mathcal{Q}_{\tf}$
are the usual pull-backs of $\mathcal{S}$, $T_{X}$, and $\mathcal{Q}_{0}$ over $U_{\reg}$,
we obtain
$$
(\nu_{*}(\nu^{[*]}\mathcal{S}))^{G} = \mathcal{S}^{\vee\vee}=\mathcal{S}, \quad
(\nu_{*}T_{V})^{G} = T_{X}^{\vee\vee}=T_{X}, \quad
(\nu_{*}\mathcal{Q}_{\tf})^{G} = \mathcal{Q}_{0}^{\vee\vee}
$$
on $U$.
Together with the exactness of the functor $(\nu_{*}(\bullet))^{G}$ (see \cite[Lemma A.3]{GKKP11}),
we deduce the following short exact sequence
$$
0 \to \mathcal{S} \to T_{X} \to \mathcal{Q}_{0}^{\vee\vee} \to 0
\quad\text{on } U.
$$
This shows that $\mathcal{Q}_{0} \cong \mathcal{Q}_{0}^{\vee\vee}$ on $U$.
This completes the proof.
\end{proof}

We return to the proof of Theorem \ref{thm-quot}. 
By construction,
the sheaf $\mathcal{S} \subset T_{X}$ (defined by the dominant rational map \(\phi\)) is an algebraically integrable foliation.
Since $\mathcal{Q}_{0}$ is locally free by Claim \ref{claim-ref} and Proposition \ref{prop-flat},
the exact sequence \eqref{eq-2} splits locally.
Thus, the sheaf $\mathcal{S}$ is a weakly regular foliation.
Since $X$ is strongly $\mathbb{Q}$-factorial (see~\cite[Proposition 5.15]{KM98}), 
by Theorem \ref{thm-weak}, there exists an equidimensional holomorphic fibration $\phi' \colon X \to Y'$  
such that the foliation $\mathcal{S}$ is induced by this fibration. 
Moreover, \(Y'\) is \(\mathbb{Q}\)-factorial by Lemma \ref{lem-base}. 

Consider Setup \ref{set-mrc} for the equidimensional fibration $\phi' \colon X \to Y'$, 
which satisfies the assumptions of Setup \ref{set-mrc} by Remark \ref{rem-wreg}~(3). 
By Proposition \ref{prop-flat}, we consider the numerically flat locally free sheaf
$\mathcal{Q}'\coloneqq \phi'^{[*]}\Omega_{Y'}$ associated with $\phi' \colon X \to Y'$.  
Let $\rho \colon \pi_{1}(X) \to \GL(r,\mathbb{C})$ be a linear representation
associated with the flat vector bundle $\mathcal{Q}'$, 
where $r\coloneqq \rank \mathcal{Q}'$.
A general leaf of $\mathcal{S}$ is birational to a general fiber of the MRC fibration $\phi \colon X \dashrightarrow Y$,
and is thus rationally connected.
The fibration $\phi' \colon X \to Y'$ is therefore also the MRC fibration of $X$.
Hence, with \(Y\) and \(\phi\) replaced by \(Y'\) and \(\phi'\), we may assume that \(\phi\) is a holomorphic equidimensional MRC fibration. 
By \cite[Theorem 5.2]{Kol93} (or \cite[Theorem 4.1]{BC15}),
we have an isomorphism $\pi_{1}(X) \cong \pi_{1}(Y)$.
Using this isomorphism, we obtain a representation $\pi_{1}(Y) \to \GL(r,\mathbb{C})$
and the associated flat vector bundle $\mathcal{E}$.
By construction, we have $\mathcal{Q} \cong \phi^{*}\mathcal{E}$.
Since $\mathcal{Q}=\phi^{*}\Omega_{Y}$ over $X_{\reg}$,
it follows from reflexivity that $\Omega_{Y}^{[1]} \cong \mathcal{E}$,
and hence $T_Y$ is a flat locally free sheaf; in particular, 
by Lemma \ref{lem-base} and the known solution to Zariski--Lipman's conjecture \cite[Corollary 1.3]{GK14} for klt varieties, we obtain that \(Y\) is smooth. 
Thus, the base $Y$ is an \'etale quotient of a complex torus. 
Since \(\phi\) is an equidimensional holomorphic map from a klt variety to a compact K\"ahler manifold, \cite[Page 145 (Theorem) and Page 158 (Proposition)]{Fis76} and \cite[Corollary 5.25]{KM98} imply that \(\phi\) is flat. 
Furthermore, a very general fiber \(F\) has pseudo-effective tangent sheaf by the generic surjectivity of \(T_X|_F\to T_F\). 

We finally prove the asserted properties of the fibration using the functorial resolution. 
Let $\beta \colon \widehat{X} \to X$ be a functorial resolution of $X$.
Then, there is a generically surjective  morphism $T_{\widehat{X}} \to \beta^{*} \mathcal{Q}=\beta^*\phi^*T_Y$ induced by \(0\to\beta^*\phi^*\Omega_Y\to \Omega_{\widehat{X}}\).
Composed with the generically surjective map \(\beta^*\beta_*T_{\widehat{X}}\cong\beta^*T_X\to T_{\widehat{X}}\) induced by the evaluation   via the functorial resolution, we obtain the map \(s_1\colon \beta^*T_X\to \beta^*\phi^*T_Y\). 
On the other hand, pulling back \eqref{eq-2} via \(\beta\), we get a surjective morphism \(s_2\colon \beta^*T_X\to \beta^*\phi^*T_Y\).
Since \(s_1\) and \(s_2\) agree on a Zariski open set of \(\widehat{X}\), we have \(s_1=s_2\) and thus \(T_{\widehat{X}}\to \beta^*\phi^*T_Y\) is also surjective.
In particular, \(\widehat{X}\to Y\) is smooth. 
The desired properties of each fiber of \(\phi\) follow from the smoothness of \(\phi\circ\beta\), the flatness of \(\phi\) and the fact that \(X\) is Cohen-Macaulay (cf.~\cite[Lemma 5.2]{IMZ23}). 
\end{proof}

\begin{proof}[Proof of Theorem \ref{thm-str}]

Since $\widehat{q}(X) \leq \dim X$, 
after replacing $X$ with a finite quasi-\'etale cover, we may assume that $\widehat{q}(X)=q(X)$ and that $X$ is maximally quasi-\'etale.
Let $\phi \colon X \dashrightarrow Y$ be an MRC fibration onto a compact K\"ahler manifold.
By Remark \ref{rem-wreg} (3), all the assumptions in Setup \ref{set-mrc} are satisfied. 
By Proposition \ref{prop-flat}, the numerical dimension of $K_{Y}$ is equal to zero.
By Conjecture \ref{conj-min}, we take a minimal model $\alpha \colon Y \dashrightarrow Y_{\min}$ of $Y$ such that $Y_{\min}$ has terminal singularities and $K_{Y_{\min}}$ is numerically trivial.

After replacing $Y$ with a bimeromorphic model, we may assume that $\alpha \colon Y \dashrightarrow Y_{\min}$ is an everywhere-defined bimeromorphic morphism.
Consider the diagram: 
\begin{equation*} \xymatrix@C=40pt@R=30pt{ \widetilde X \ar[rr]^{\pi} \ar[rd]_{\varphi} \ar@/_50pt/[rrrd]^{\varphi_{\min}} & & X \ar@{-->}[ld]^{\phi} \ar@{-->}[rd]^{\phi_{\min}}&&\\ & Y \ar[rr]_{\alpha} && Y_{\min}.& } \end{equation*} 
Since $K_{Y_{\min}}$ is numerically trivial and $Y_{\min}$ has terminal singularities, 
there exists an effective $\alpha$-exceptional divisor $N_{Y}$ on $Y$ such that
$$
K_{Y} \sim_{\mathbb{Q}} \alpha^{*}K_{Y_{\min}}+ N_{Y} \equiv_{\num} N_{Y}
$$
and 
the support of $N_{Y}$ coincides with the $\alpha$-exceptional locus. 
In  particular, the support of $N_{Y}$ contains $\alpha^{-1}(Y_{\min, \sing})$.
Applying Remark \ref{rem-wreg} (3) and  Proposition \ref{prop-flat} to $\phi \colon X \dashrightarrow Y$, we see that $\varphi^{*}N_{Y}$ is numerically equivalent to a $\pi$-exceptional divisor.
Thus the support of $\varphi^{*}N_{Y}$ is contained in the $\pi$-exceptional locus, which shows that
\[
\codim \pi (\varphi^{-1}\alpha^{-1}(Y_{\min, \sing})) \geq 2.
\]
Consequently, the MRC fibration $\phi_{\min} \colon X \dashrightarrow Y_{\min}$ satisfies the assumptions of Setup \ref{set-mrc} (cf.~Remark \ref{rem-wreg} (4)).

For the minimal model \(Y_{\textup{min}}\), the singular Beauville--Bogomolov--Yau decomposition (see \cite{BGL22,HP19}; cf.~\cite{Dru18,GKP16b,GGK19}) shows that there exists a finite quasi-\'etale cover
$$
\nu \colon T \times Z \to Y_{\min},
$$
where $T$ is a complex  torus and $Z$ is a product of singular hyperK\"ahler varieties and strict Calabi--Yau varieties.
Note that $Z$ has vanishing augmented irregularity $\widehat{q}(Z) = 0$.
Hereafter, for simplicity of notation, we write $\phi$ and $\varphi$ for $\phi_{\min}$ and $\varphi_{\min}$, respectively.
Moreover, let $\Gamma$ be the normalization of the main component of the closure of the graph of $\phi$.
\begin{equation*} \xymatrix@C=40pt@R=30pt{ \Gamma \ar[rr]^{\pi} \ar[rd]_{\varphi} & & X \ar@{-->}[ld]^{\phi}\\ & Y_{\min}. & } \end{equation*}
Let $I_{\phi}$ be the indeterminacy locus of the MRC fibration $\phi \colon X \dashrightarrow Y_{\min}$.
Consider the induced morphism
\[
\phi_{0} \colon X_{0}\coloneqq X \setminus I_{\phi} \to Y_{0}\coloneqq Y_{\min} \setminus \varphi(\pi^{-1}(I_{\phi}))
\]
and the fiber product, which fits into the following Cartesian diagram: 
\begin{equation*} \xymatrix@C=45pt@R=32pt{ X'_{0} \ar[r] & X_{0} \times_{Y_{0}} \nu^{-1}(Y_{0}) \ar[r] \ar[d] & X_{0} \subset X \ar[d]^{\phi_{0}} \\ & \nu^{-1}(Y_{0}) \ar[r]^{\nu|_{\nu^{-1}(Y_{0})}} & Y_{0} \subset Y_{\min}, } \end{equation*}
where $X'_{0}$ is the normalization of $X_{0} \times_{Y_{0}} \nu^{-1}(Y_{0})$.
Note that \(X\) is klt and \(Y_{\textup{min}}\) is not uniruled; hence, it follows from \cite{HM07} that \(Y_0\) is a non-empty open subset. 
Since
$$
\pi \bigl(\varphi^{-1}(Y_{\min, \sing}\cap Y_{0})\bigr)
= \phi_{0}^{-1} (Y_{\min, \sing} \cap Y_{0})
$$
has codimension at least $2$, the morphism $X'_{0} \to X_{0}$ is a finite quasi-\'etale cover.
Moreover, since $\codim I_{\phi} \geq 2$, we have
$$
\pi_{1}(X_{0, \reg}) \cong \pi_{1}(X_{\reg}).
$$
Thus, the subgroup of $\pi_{1}(X_{0, \reg})$ corresponding to $X'_{0} \to X_{0}$ determines a finite-index subgroup of $\pi_{1}(X_{\reg})$.
This shows that the finite quasi-\'etale cover $X'_{0} \to X_{0}$ extends to a finite quasi-\'etale cover $X' \to X$.
Thus, after replacing $X$ with $X'$, we may assume that $Y_{\min}$ is isomorphic to $T \times Z$.

We claim that the composite map 
$$
X \dashrightarrow Y_{\min}= T\times Z \to Z
$$
is also almost holomorphic.
Take the normalization of the graph \(\Gamma'\) with the induced projections \(p'\colon \Gamma'\to X\) and \(q'\colon \Gamma'\to Z\).
If the composite map is not almost holomorphic, some exceptional component \(E\) of \(p'\) would dominate \(Z\).
Since \(E\) is rationally connected by \cite{Fuj26} or \cite{HM07}, it follows that \(Z\) is rationally connected as well.
On the other hand, since \(Z\)  has only canonical singularities, by \cite{Ou25}, they are not covered by rational curves, which is a contradiction.

Now, we regard the induced almost holomorphic map \(X\dashrightarrow Z\) 
as $\phi \colon X \dashrightarrow Y$ in Setup \ref{set-mrc}.
We consider the numerically flat locally free sheaf $\mathcal{Q}$ constructed in Proposition \ref{prop-flat} from the cotangent sheaf $\Omega_{Z}$; in particular, \(\textup{rank}\mathcal{Q}=\dim Z>0\). 
Note that a dominant rational map from a compact klt K\"ahler variety to a complex torus is indeed well-defined. 
Hence, the first half of Theorem \ref{thm-str} is proved if we can show that \(Z\) is a single point; then the second half follows from the generic surjectivity of \(T_X|_F\to T_F\) with \(F\) a very general fiber.

Assume to the contrary that \(Z\) is of positive dimension.
Then \(\textup{rank}\mathcal{Q}>\dim T\). 
Let $\rho \colon \pi_{1}(X) \to \GL(r,\mathbb{C})$ be the representation corresponding to the flat locally free sheaf $\mathcal{Q}$.
Since a general fiber of $\varphi\colon \Gamma \to Y_{\min}$ (birational to that of \(\phi\)) is rationally connected, 
we obtain an isomorphism
\[
\pi_{1}(X) \xleftarrow[\cong]{\quad \pi_{*}\quad} \pi_{1}(\Gamma) \xleftarrow[\cong]{\quad \varphi_{*}\quad } \pi_{1}(Y_{\min}) \cong \pi_{1}(T) \times \pi_{1}(Z)
\]
by \cite{Tak03} and \cite[Theorem 5.2]{Kol93} (or \cite[Theorem 4.1]{BC15}).
Therefore, \[
q(X)=\dim T.\]
The flat locally free sheaf $\mathcal{E}$ defined by
$$
\pi_{1}(T) \times \pi_{1}(Z) \cong \pi_{1}(X) \xrightarrow{\quad \rho \quad} \GL(r,\mathbb{C})
$$
satisfies $\varphi^{*}\mathcal{E}=\pi^{*}\mathcal{Q}$ by construction.
Consider the induced linear representation
$$
\tau \colon \pi_{1}(Z) \longrightarrow { \id_{\pi_{1}(T)} }\times \pi_{1}(Z) \subset \pi_{1}(X) \xrightarrow{\quad \rho \quad} \GL(r,\mathbb{C}).
$$
Since $T_X$ is pseudo-effective, it follows from \cite[Proposition 4.1]{MZ} that 
the image $\Image {\tau}$ of this representation is virtually abelian.  
As $q(Z)=0$, it follows that $\Image {\tau}$ is finite. 
Consider the finite \'etale cover $Z' \to Z$ corresponding to $\Ker {\tau}$, and take the fiber product with respect to $Y_{\min} \to Z$ and $X \dashrightarrow Z$.
Although the almost holomorphic map $X \dashrightarrow Z$ need not be everywhere defined, 
the corresponding fiber product can be taken in the same way as above.
Thus, we may assume that $\rho \colon \pi_{1}(Y_{\min}) \to \GL(r,\mathbb{C})$ is induced by a representation of $\pi_{1}(T)$.

Let $p \colon T \times Z \to T$ and $q \colon T \times Z \to Z$ be the projections.
Then there exists a flat locally free sheaf $\mathcal{F}$ on $T$ such that $p^{*}\mathcal{F}=\mathcal{E}$.
However, by construction, for a general fiber $T\times \{z\}$ of $q \colon T \times Z \to Z$, the restriction $\mathcal{E}|_{T\times \{z\}}$ is the trivial vector bundle.
Via the isomorphism $p \colon T\times \{z\} \to T$,  we deduce that $\mathcal{F}$, and hence $p^{*}\mathcal{F}=\mathcal{E}$, is the trivial vector bundle.
This implies that $\mathcal{Q}^{\vee} \subset \Omega_{X}^{[1]}$ is a trivial subvector bundle. 
Together with the pull-back of \(\Omega_T\), it gives a generically injective map
\[
\mathcal{O}_X^{\oplus\dim T}\oplus\mathcal{Q}^{\vee}\to\Omega_X^{[1]}
\]
because on the locus where \(X\dashrightarrow T\times Z\) is defined the two summands come from the two summands of \(\Omega_{T\times Z}=p^*\Omega_T\oplus q^*\Omega_Z\).  
This yields a contradiction by noting that 
\begin{align*}
\dim T&=q(X)=h^1(X,\mathcal{O}_X)=h^1(\Gamma,\mathcal{O}_{\Gamma})=h^0(\Gamma,\Omega_{\Gamma}^1)=h^0(X,\pi_*\Omega_{\Gamma}^1)\\
&=h^0(X,\Omega_X^{[1]})\geq \textup{rank}\mathcal{Q}+\dim T=\dim T+\dim Z>\dim T.
\end{align*}
This completes the proof.
\end{proof}

\begin{rem}\label{rem-minimal}
The required existence of such a minimal model is known in the projective case by the abundance theorem for numerically trivial canonical divisors (see \cite{Gon13}), and in the low-dimensional K\"ahler cases (see \cite{CHP16, DHP24,DH25}).
The existence of minimal models is used only at this point. Hence, Corollary \ref{cor-str} follows directly from Theorem \ref{thm-str}.
\end{rem}

We end this section with the following corollary, whose proof follows immediately from the proof of Theorem \ref{thm-str}.
\begin{cor}\label{cor-int}
Let \(f\colon X\to X\) be an int-amplified endomorphism of a klt compact K\"ahler variety.
Assume that Question \ref{q_mrc} has an affirmative answer for \(f\).
Then up to a quasi-\'etale cover, \(X\) admits a flat MRC fibration onto a complex torus such that every fiber is reducible and irreducible, and a very general fiber has pseudo-effective tangent sheaf. 
\end{cor}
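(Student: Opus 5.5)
The plan is to use Question \ref{q_mrc} to obtain an equivariant MRC fibration, and then to rerun the proof of Theorem \ref{thm-str} equivariantly. This removes the need for Conjecture \ref{conj-min}; the flatness and fiber statements then come from \cite[Lemmas 2.6 and 5.2]{Meng20}.

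\emph{Step 1: reductions.} By \cite[Theorems 1.1 and 1.4]{MZ}, $T_X$ is pseudo-effective. We pass to a maximally quasi-\'etale cover with $\widehat q(X)=q(X)$. An int-amplified endomorphism lifts to a suitable iterate on a quasi-\'etale cover, and on a maximally quasi-\'etale cover with $\widehat q=q$ it lifts up to replacing $f$ by $f^s$. So we may assume $X$ is maximally quasi-\'etale and $f$ is still int-amplified.

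\emph{Step 2: the base is $Q$-abelian.} Let $\pi\colon X\dashrightarrow Y$ be the $f$-equivariant MRC fibration supplied by Question \ref{q_mrc}, with $g\colon Y\to Y$ satisfying $\pi\circ f^s=g\circ\pi$. Then $g$ is again int-amplified, and $Y$ is non-uniruled. The structure theory for int-amplified endomorphisms of non-uniruled klt varieties, via \cite{Meng20} and its K\"ahler analogues in \cite{MZ}, gives that $K_Y\equiv 0$ and that $Y$ is a $Q$-torus. Replacing $Y$ by a suitable model and using $g$ to see that $\pi$ is almost holomorphic, Remark \ref{rem-wreg} applies, and hence Proposition \ref{prop-flat} applies. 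We then rerun the argument of Theorem \ref{thm-str} with $Y_{\min}:=Y$. After a quasi-\'etale cover extended from $X_0$ as in that proof, $Y\cong T\times Z$ with $Z$ a point, since $Y$ is already a quasi-\'etale quotient of a torus. Consequently $X\to T$ is a holomorphic MRC fibration onto a complex torus; dominant rational maps to tori from klt varieties are holomorphic. A very general fiber $F$ has pseudo-effective tangent sheaf by generic surjectivity of $T_X|_F\to T_F$.

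\emph{Step 3: flatness and fibers.} The fibration $X\to T$ is holomorphic and equivariant under the int-amplified $f$. By \cite[Lemmas 2.6 and 5.2]{Meng20}, it is equidimensional, hence flat by \cite{Fis76} and \cite[Corollary 5.25]{KM98}, and every fiber is reduced and irreducible.

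\textbf{Main obstacle.} The step I expect to be hardest is Step 2 in the K\"ahler setting: showing that the base of the equivariant MRC fibration carries an int-amplified endomorphism and is therefore a $Q$-torus, without Conjecture \ref{conj-min}. The first thing I would try is to combine the equivariance with Proposition \ref{prop-flat}(2), which gives numerical dimension zero of $K_Y$, and then use that $g^*$ acts with eigenvalues of modulus greater than $1$ on $H^{1,1}$. This should force the class $K_Y$, being $g^*$-invariant up to ramification, to be numerically trivial.
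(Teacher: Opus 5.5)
Your outline follows the paper's strategy: an equivariant MRC fibration, $Y$ a quasi-\'etale quotient of a torus (the paper cites \cite[Proposition 3.2]{MZ}), a rerun of the proof of Theorem \ref{thm-str} with $Y_{\min}:=Y$, and equidimensionality via the dynamics. However, the one step that actually replaces Conjecture \ref{conj-min} is not justified.

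\textbf{The missing justification.} To rerun that proof with $Y_{\min}:=Y$ you need the codimension condition of Setup \ref{set-mrc}, namely $\codim \pi(\varphi^{-1}(Y_{\sing}))\ge 2$ for the graph $\widetilde X\to Y$. This condition is exactly what makes the pulled-back cover $X'_0\to X_0$ quasi-\'etale, so that $Y$ can then be replaced by $T$. Your justification (``using $g$ to see that $\pi$ is almost holomorphic, Remark \ref{rem-wreg} applies'') does not give it:
\begin{itemize}
\item An MRC fibration is almost holomorphic anyway.
\item Remark \ref{rem-wreg} covers only smooth $Y$, an everywhere-defined equidimensional $\phi$, or a terminal minimal model. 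A quasi-\'etale quotient of a torus is in general singular and not terminal; for instance, a Kummer surface has crepant $(-2)$-curves over its nodes. So neither Remark \ref{rem-wreg} nor the $N_Y$-argument from the proof of Theorem \ref{thm-str} applies.
\item A priori a divisor of $X$ could be mapped into $Y_{\sing}$, and the fiber product with $A\to Y$ may then branch along it.
\item Passing to a smooth model of $Y$ does not help, since you lose the torus-quotient structure and the holomorphic $g$.
\end{itemize}

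\textbf{How to close it.} Use the dynamics already at this stage, not only at the end. Let $\widetilde X$ be the normalization of the graph of $\phi$. By the universal property of the graph, $f^s$ and $g$ lift to a surjective endomorphism $h$ of $\widetilde X$, which is again int-amplified. The locus of $Y$ over which $\varphi\colon\widetilde X\to Y$ is not equidimensional is $g^{-1}$-invariant by \cite[Lemma 4.3]{Meng20}, hence empty by \cite[Lemma 5.4]{Zho21}. Equidimensionality then gives $\codim\varphi^{-1}(Y_{\sing})\ge 2$. Since $\pi$ is birational, this yields the condition of Setup \ref{set-mrc}. This is the same equidimensionality argument you invoke in Step 3 for $X\to T$, applied earlier to $\widetilde X\to Y$.

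So the ``main obstacle'' you single out is not the real one. That $Y$ is a quasi-\'etale torus quotient is a citation; the real work is the codimension condition.

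A minor point: if you pass to a maximally quasi-\'etale cover before invoking Question \ref{q_mrc}, you must also check that the equivariant MRC fibration and $g$ transfer to the cover via Stein factorization. The paper instead relies on \cite[Theorem 1.4]{MZ} for an $f$-equivariant cover.
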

\begin{proof}
Let \(\phi\colon X\dashrightarrow Y\) be the dominant meromorphic map as in Question \ref{q_mrc} such that after iterates, \(f\) descends to an int-amplified endomorphism \(g\) on \(Y\).
Since \(Y\) is non-uniruled,  it follows from \cite[Proposition 3.2]{MZ} that \(Y\) is a quasi-\'etale quotient of a complex torus.
We aim to show that \(\pi\) satisfies the assumptions in Setup \ref{set-mrc}.
It suffices to show that \(\textup{codim}\,\pi(\varphi^{-1}(Y_{\sing}))\geq 2\) in the commutative diagram below
\begin{equation*}
\xymatrix@C=40pt@R=30pt{
\widetilde X \ar[rr]^{\pi} \ar[rd]_{\varphi}  &  & X \ar@{-->}[ld]^{\phi}\\
& Y. &
}
\end{equation*}
Here, it is sufficient to consider the case when \(\widetilde{X}\) is the normalization of the graph of \(\pi\).
By the universal property of the graph, \(f\) and \(g\) lift to a surjective endomorphism \(h\) on \(\widetilde{X}\), which is also int-amplified.
As proved in \cite[Lemma 4.3]{Meng20}, the closed subset of \(Y\) over which \(\varphi\) is not equidimensional, is \(g^{-1}\)-invariant; hence, it is empty due to \cite[Lemma 5.4]{Zho21}.
Since \(\pi\) is a birational morphism, it follows that  \(\textup{codim}\,\pi(\varphi^{-1}(Y_{\sing}))\geq 2\) which concludes the desired claim.

By Theorem \ref{thm-str} and \cite[Theorem 1.4]{MZ}, up to an \(f\)-equivariant quasi-\'etale cover, \(X\) admits a holomorphic MRC fibration \(\chi\colon X\to T\) onto a complex torus \(T\) such that \(f\) descends to an int-amplified endomorphism of \(T\).
By \cite[Proof of Lemma 4.3]{Meng20} again, the closed subset \(S\) of \(Y\) over which \(\varphi\) has reducible, or non-reduced, or non-equidimensional fibers is \(g^{-1}\)-invariant.
Therefore, \cite[Lemma 5.4]{Zho21} implies that 
\(S\) is empty. 
Consequently, \(\chi\) is equidimensional and hence flat by the miracle flatness.
\end{proof}
\begin{rem}
Motivated by \cite[Theorem 5.1 (3)]{MZ}, we may also ask whether a general fiber of the structure in Corollary \ref{cor-int} is maximally quasi-\'etale, in which case, a general periodic fiber is of Fano type.
\end{rem}

\bibliographystyle{amsalpha}

\begin{thebibliography}{99}
\bibitem[BGL22]{BGL22}
Benjamin Bakker, Henri Guenancia, and Christian Lehn.
\newblock Algebraic approximation and the decomposition theorem for K\"ahler Calabi--Yau varieties.
\newblock {\em Invent. Math.} \textbf{228} (2022), no.~3, 1255--1308.

\bibitem[BKK+15]{BKK}
Thomas Bauer, S\'andor J. Kov\'acs, Alex K\"uronya, Ernesto Carlo Mistretta, Tomasz Szemberg, and Stefano Urbinati.
\newblock On positivity and base loci of vector bundles.
\newblock {\em Eur. J. Math.} \textbf{1} (2015), no.~2, 229--249.





\bibitem[BDPP13]{BDPP13}
S\'ebastien Boucksom, Jean-Pierre Demailly, Mihai P\u{a}un, and Thomas Peternell.
\newblock The pseudo-effective cone of a compact K\"ahler manifold and varieties of negative Kodaira dimension.
\newblock {\em J. Algebraic Geom.} \textbf{22} (2013), no.~2, 201--248.

\bibitem[BC15]{BC15}
Yohan Brunebarbe and Fr\'ed\'eric Campana.
\newblock Fundamental group and pluridifferentials on compact K\"ahler manifolds.
\newblock {\em Mosc. Math. J.} \textbf{16} (2016), no.~4, 651--658.

\bibitem[CCM21]{CCM21}
Fr\'ed\'eric Campana, Junyan Cao, and Shin-ichi Matsumura.
\newblock Projective klt pairs with nef anti-canonical divisor.
\newblock {\em Algebr. Geom.} \textbf{8} (2021), no.~4, 430--464.

\bibitem[CHP16]{CHP16}
Fr\'ed\'eric Campana, Andreas H\"oring, and Thomas Peternell.
\newblock Abundance for K\"ahler threefolds.
\newblock {\em Ann. Sci. \'Ec. Norm. Sup\'er. (4)} \textbf{49} (2016), no.~4, 971--1025.

\bibitem[CP91]{CP91}
Fr\'ed\'eric Campana and Thomas Peternell.
\newblock Projective manifolds whose tangent bundles are numerically effective.
\newblock {\em Math. Ann.} \textbf{289} (1991), no.~1, 169--187.

\bibitem[Cao19]{Cao19}
Junyan Cao.
\newblock Albanese maps of projective manifolds with nef anticanonical bundles.
\newblock {\em Ann. Sci. \'Ec. Norm. Sup\'er. (4)} \textbf{52} (2019), no.~5, 1137--1154.

\bibitem[CDM]{CDM}
Junyan Cao, Ya Deng, and Shin-ichi Matsumura.
\newblock A flatness criterion for pseudo-effective sheaves on compact K\"ahler spaces.
\newblock Preprint, arXiv:2609.05154, 2026.


\bibitem[CH19]{CH19}
Junyan Cao and Andreas H\"oring.
\newblock A decomposition theorem for projective manifolds with nef anticanonical bundle.
\newblock {\em J. Algebraic Geom.} \textbf{28} (2019), no.~3, 567--597.

\bibitem[CH20]{CH20}
Junyan Cao and Andreas H\"oring.
\newblock Rational curves on compact K\"ahler manifolds.
\newblock {\em J. Differential Geom.} \textbf{114} (2020), no.~1, 1--39.

\bibitem[CP]{CP}
Junyan Cao and Mihai P\u{a}un.
\newblock Remarks on relative canonical bundles and algebraicity criteria for foliations in the K\"ahler context.
\newblock Preprint, arXiv:2502.02183, 2025.


\bibitem[DH25]{DH25}
Omprokash Das and Christopher D. Hacon.
\newblock The log minimal model program for K\"ahler $3$-folds.
\newblock {\em J. Differential Geom.} \textbf{130} (2025), no.~1, 151--207.

\bibitem[DHP24]{DHP24}
Omprokash Das, Christopher D. Hacon, and Mihai P\u{a}un.
\newblock On the $4$-dimensional minimal model program for K\"ahler varieties.
\newblock {\em Adv. Math.} \textbf{443} (2024), Paper No.~109615, 68 pp.

\bibitem[Dem12]{Dem12}
Jean-Pierre Demailly.
\newblock {\em Analytic Methods in Algebraic Geometry}.
\newblock Surveys of Modern Mathematics, vol.~1, International Press, Somerville, MA; Higher Education Press, Beijing, 2012.

\bibitem[DPS94]{DPS94}
Jean-Pierre Demailly, Thomas Peternell, and Michael Schneider.
\newblock Compact complex manifolds with numerically effective tangent bundles.
\newblock {\em J. Algebraic Geom.} \textbf{3} (1994), no.~2, 295--345.


\bibitem[Dru18]{Dru18}
St\'ephane Druel.
\newblock A decomposition theorem for singular spaces with trivial canonical class of dimension at most five.
\newblock {\em Invent. Math.} \textbf{211} (2018), no.~1, 245--296.


\bibitem[DGP24]{DGP24}
St\'ephane Druel, Henri Guenancia, and Mihai P\u{a}un.
\newblock A decomposition theorem for $\mathbb{Q}$-Fano K\"ahler--Einstein varieties.
\newblock {\em C. R. Math. Acad. Sci. Paris} \textbf{362} (2024), 93--118.

\bibitem[Fis76]{Fis76}
Gerd Fischer.
\newblock {\em Complex Analytic Geometry}.
\newblock Lecture Notes in Mathematics, vol.~538, Springer-Verlag, Berlin--New York, 1976.




\bibitem[FO]{FO25}
Xin Fu and Wenhao Ou.
\newblock Orbifold Bogomolov--Gieseker inequalities on compact K\"ahler varieties.
\newblock Preprint, arXiv:2511.03530, 2025.


\bibitem[Fuj]{Fuj26}
Osamu Fujino.
\newblock Notes on rational chain connectedness.
\newblock Preprint, arXiv:2602.19415, 2026.

\bibitem[Gon11]{Gon11}
Yoshinori Gongyo.
\newblock On the minimal model theory for dlt pairs of numerical log Kodaira dimension zero.
\newblock {\em Math. Res. Lett.} \textbf{18} (2011), no.~5, 991--1000.

\bibitem[Gon13]{Gon13}
Yoshinori Gongyo.
\newblock Abundance theorem for numerically trivial log canonical divisors of semi-log canonical pairs.
\newblock {\em J. Algebraic Geom.} \textbf{22} (2013), no.~3, 549--564.

\bibitem[GHS03]{GHS03}
Tom Graber, Joe Harris, and Jason Starr.
\newblock Families of rationally connected varieties.
\newblock {\em J. Amer. Math. Soc.} \textbf{16} (2003), no.~1, 57--67.

\bibitem[GK14]{GK14}
Patrick Graf and S\'andor J. Kov\'acs.
\newblock An optimal extension theorem for 1-forms and the {L}ipman-{Z}ariski conjecture.
\newblock {\em Doc. Math.} \textbf{19} (2014), 815--830.




\bibitem[GGK19]{GGK19}
Daniel Greb, Henri Guenancia, and Stefan Kebekus.
\newblock Klt varieties with trivial canonical class: holonomy, differential forms, and fundamental groups.
\newblock {\em Geom. Topol.} \textbf{23} (2019), no.~4, 2051--2124.

\bibitem[GKKP11]{GKKP11}
Daniel Greb, Stefan Kebekus, S\'andor J. Kov\'acs, and Thomas Peternell.
\newblock Differential forms on log canonical spaces.
\newblock {\em Publ. Math. Inst. Hautes \'Etudes Sci.} \textbf{114} (2011), 87--169.

\bibitem[GKP16a]{GKP16}
Daniel Greb, Stefan Kebekus, and Thomas Peternell.
\newblock \'Etale fundamental groups of Kawamata log terminal spaces, flat sheaves, and quotients of abelian varieties.
\newblock {\em Duke Math. J.} \textbf{165} (2016), no.~10, 1965--2004.

\bibitem[GKP16b]{GKP16b}
Daniel Greb, Stefan Kebekus, and Thomas Peternell.
\newblock Singular spaces with trivial canonical class.
\newblock In {\em Minimal Models and Extremal Rays}, Adv. Stud. Pure Math., vol.~70, Math. Soc. Japan, Tokyo, 2016, pp.~67--113.

\bibitem[Gue]{Gue25}
Henri Guenancia.
\newblock Beauville--Bogomolov decomposition for klt varieties.
\newblock Preprint, arXiv:2509.10053, 2025.

\bibitem[GR56]{GR56}
H.~Grauert and R.~Remmert. 
\newblock Plurisubharmonische Funktionen in komplexen R{\"a}umen. 
\newblock {\em Math. Z.} \textbf{65} (1956), 175--194.


\bibitem[HM07]{HM07}
Christopher D. Hacon and James McKernan.
\newblock On Shokurov's rational connectedness conjecture.
\newblock {\em Duke Math. J.} \textbf{138} (2007), no.~1, 119--136.

\bibitem[HW20]{HW20}
Gordon Heier and Bun Wong.
\newblock On projective K\"ahler manifolds of partially positive curvature and rational connectedness.
\newblock {\em Doc. Math.} \textbf{25} (2020), 219--238.

\bibitem[HP19]{HP19}
Andreas H\"oring and Thomas Peternell.
\newblock Algebraic integrability of foliations with numerically trivial canonical bundle.
\newblock {\em Invent. Math.} \textbf{216} (2019), no.~2, 395--419.

\bibitem[HIM22]{HIM22}
Genki Hosono, Masataka Iwai, and Shin-ichi Matsumura.
\newblock On projective manifolds with pseudo-effective tangent bundle.
\newblock {\em J. Inst. Math. Jussieu} \textbf{21} (2022), no.~5, 1801--1830.

\bibitem[HSW81]{HSW81}
Alan Howard, Brian Smyth, and Hung-Hsi Wu.
\newblock On compact K\"ahler manifolds of nonnegative bisectional curvature I and II.
\newblock {\em Acta Math.} \textbf{147} (1981), no.~1--2, 51--70.

\bibitem[IMZ]{IMZ23}
Masataka Iwai, Shin-ichi Matsumura, and Guolei Zhong.
\newblock Positivity of tangent sheaves of projective varieties: the structure of MRC fibrations.
\newblock To appear in {\em Algebraic Geometry}, arXiv:2309.09489.

\bibitem[Kol93]{Kol93}
J\'anos Koll\'ar.
\newblock Shafarevich maps and plurigenera of algebraic varieties.
\newblock {\em Invent. Math.} \textbf{113} (1993), no.~1, 177--215.

\bibitem[KM98]{KM98}
J\'anos Koll\'ar and Shigefumi Mori.
\newblock {\em Birational Geometry of Algebraic Varieties}.
\newblock Cambridge Tracts in Mathematics, vol.~134, Cambridge University Press, Cambridge, 1998.

\bibitem[Mat22]{Mat22a}
Shin-ichi Matsumura.
\newblock On projective manifolds with semipositive holomorphic sectional curvature.
\newblock {\em Amer. J. Math.} \textbf{144} (2022), no.~3, 747--777.

\bibitem[Mat23]{Mat23}
Shin-ichi Matsumura.
\newblock On the minimal model program for projective varieties with pseudo-effective tangent sheaf.
\newblock {\em \'Epijournal G\'eom. Alg\'ebrique} \textbf{7} (2023), Art. 22, 13 pp.

\bibitem[Mat26]{Mat26}
Shin-ichi Matsumura.
\newblock Fundamental group.
\newblock In preparation.

\bibitem[MQ26]{MQ26}
Shin-ichi Matsumura and Chenghao Qing.
\newblock On compact K\"ahler manifolds with pseudo-effective tangent bundle.
\newblock {\em International Mathematics Research Notices} \textbf{2026} (2026), no.~10, rnag096.

\bibitem[MW25]{MW25}
Shin-ichi Matsumura and Juanyong Wang.
\newblock Structure theorem for projective klt pairs with nef anti-canonical divisor.
\newblock {\em J. Eur. Math. Soc.}, published online first, 2025.
\newblock doi:10.4171/JEMS/1702.

\bibitem[MWWZ]{MWWZ}
Shin-ichi Matsumura, Juanyong Wang, Xiaojun Wu, and Qimin Zhang.
\newblock Beauville--Bogomolov--Yau decomposition for K\"ahler generalized pairs. 
\newblock in preparation.

\bibitem[MZ]{MZ}
Shin-ichi Matsumura and Guolei Zhong.
\newblock Positivity of compact K\"ahler varieties admitting an int-amplified endomorphism.
\newblock In preparation.

\bibitem[MY21]{MY21}
Yohsuke Matsuzawa and Shou Yoshikawa.
\newblock Int-amplified endomorphisms on normal projective surfaces.
\newblock {\em Taiwanese J. Math.} \textbf{25} (2021), no.~4, 681--697.

\bibitem[Men20]{Meng20}
Sheng Meng.
\newblock Building blocks of amplified endomorphisms of normal projective varieties.
\newblock {\em Math. Z.} \textbf{294} (2020), 1727--1747.

\bibitem[MZ20]{MZ20}
Sheng Meng and De-Qi Zhang.
\newblock Semi-group structure of all endomorphisms of a projective variety admitting a polarized endomorphism.
\newblock {\em Math. Res. Lett.} \textbf{27} (2020), no.~2, 523--549.

\bibitem[Mok88]{Mok88}
Ngaiming Mok.
\newblock The uniformization theorem for compact K\"ahler manifolds of nonnegative holomorphic bisectional curvature.
\newblock {\em J. Differential Geom.} \textbf{27} (1988), no.~2, 179--214.

\bibitem[Mor79]{Mor79}
Shigefumi Mori.
\newblock Projective manifolds with ample tangent bundles.
\newblock {\em Ann. of Math. (2)} \textbf{110} (1979), no.~3, 593--606.

\bibitem[M{\"u}l25]{Mul25}
Niklas M{\"u}ller.
\newblock Locally constant fibrations and positivity of curvature.
\newblock {\em Bull. Lond. Math. Soc.} \textbf{57} (2025), no.~4, 1005--1025.

\bibitem[Nak10]{Nak10}
Noboru Nakayama.
\newblock Intersection sheaves over normal schemes.
\newblock {\em J. Math. Soc. Japan} \textbf{62} (2010), no.~2, 487--595.

\bibitem[Ou]{Ou25}
Wenhao Ou.
\newblock A characterization of uniruled compact K\"ahler manifolds.
\newblock Preprint, arXiv:2501.18088, 2025.


 \bibitem[PT18]{PT18} 
Mihai P\u{a}un and Shigeharu Takayama. 
\newblock Positivity of twisted relative pluricanonical divisors and their direct images. 
\newblock {\em J. Algebraic Geom.} {\bf{27}} (2018), 211--272.


\bibitem[SY80]{SY80}
Yum-Tong Siu and Shing-Tung Yau.
\newblock Compact K\"ahler manifolds of positive bisectional curvature.
\newblock {\em Invent. Math.} \textbf{59} (1980), no.~2, 189--204.

\bibitem[Tak03]{Tak03}
Shigeharu Takayama.
\newblock Local simple connectedness of resolutions of log-terminal singularities.
\newblock {\em Internat. J. Math.} \textbf{14} (2003), no.~8, 825--836.

\bibitem[Wan21]{Wan21}
Juanyong Wang.
\newblock On the Iitaka conjecture $C_{n,m}$ for K\"ahler fiber spaces.
\newblock {\em Ann. Fac. Sci. Toulouse Math. (6)} \textbf{30} (2021), no.~4, 813--897.

\bibitem[Yan18]{Yan18}
Xiaokui Yang.
\newblock RC-positivity, rational connectedness and Yau's conjecture.
\newblock {\em Camb. J. Math.} \textbf{6} (2018), no.~2, 183--212.

\bibitem[Yos21]{Yos21}
Shou Yoshikawa.
\newblock Structure of Fano fibrations of varieties admitting an int-amplified endomorphism.
\newblock {\em Adv. Math.} \textbf{391} (2021), Paper No.~107964, 32 pp.

\bibitem[Zho21]{Zho21}
Guolei Zhong.
\newblock Int-amplified endomorphisms of compact K\"ahler spaces.
\newblock {\em Asian J. Math.} \textbf{25} (2021), no.~3, 369--392.



\end{thebibliography}

\end{document}